\newcommand{\midarrow}{\tikz \draw[- triangle 90] (0,0) -- +(.1,0);}
\tikzset{arrow data/.style 2 args={%
		decoration={%
			markings,
			mark=at position #1 with \arrow{#2}},
		postaction=decorate}
}%
\tikzstyle{vertex}=[circle, draw, inner sep=0pt, minimum size=6pt]
\tikzstyle{rvertex}=[circle, red, fill, draw, inner sep=0pt, minimum size=6pt]
\tikzstyle{gvertex}=[circle, green, fill, draw, inner sep=0pt, minimum size=6pt]
\tikzstyle{bvertex}=[circle, blue, fill, draw, inner sep=0pt, minimum size=6pt]
\tikzstyle{Bvertex}=[circle, black, fill, draw, inner sep=0pt, minimum size=6pt]
\tikzstyle{pvertex}=[circle, purp, fill, draw, inner sep=0pt, minimum size=6pt]
\tikzstyle{overtex}=[circle, orange, fill, draw, inner sep=0pt, minimum size=6pt]
\newcommand{\Bvertex}{\node[Bvertex]}
\tikzstyle{vertex}=[circle,black, fill=black, draw, inner sep=0pt, minimum size=6pt]
\definecolor{cof}{RGB}{219,144,71}
\definecolor{pur}{RGB}{186,146,162}
\definecolor{greeo}{RGB}{91,173,69}
\definecolor{greet}{RGB}{52,111,72}
\newtheorem{thm}{Theorem}
\newtheorem{lem}[thm]{Lemma}
\newtheorem{cor}[thm]{Corollary}
\newtheorem{claim}[thm]{Claim}
\theoremstyle{definition}
\theoremstyle{remark}
\begin{document}


\author{Ryan Alvarado}
\address[Alvarado]{Department of Mathematics and Statistics, Amherst College, Amherst, MA 01002, USA}
\email{rjalvarado@amherst.edu}

\author{Maia Averett}
\address[Averett]{Department of Mathematics and Computer Science, Mills College, Oakland, CA 94613, USA}
\email{maverett@mills.edu}

\author{Benjamin Gaines}
\address[Gaines]{Department of Mathematics and Physics, Iona College, New Rochelle, NY 10801, USA}
\email{bgaines@iona.edu}

\author{Christopher Jackson}
\address[Jackson]{59433-019, FCI Coleman Low, 
Federal Correctional Institution,
P.O. Box 1031
Coleman, FL 33521}

\author{Mary Leah Karker}
\address[Karker]{Department of Mathematics and Computer Science, Providence College, Providence, RI 02918, USA}
\email{mkarker@providence.edu}

\author{Malgorzata Aneta Marciniak}
\address[Marciniak]{LaGuardia Community College of the City University of New York, Long Island City, NY 11101, USA}
\email{mmarciniak@lagcc.cuny.edu}

\author{Francis Su}
\address[Su]{Department of Mathematics, Harvey Mudd College, Claremont, CA 91711, USA}
\email{su@math.hmc.edu}

\author{Shanise Walker}
\address[Walker]{Department of Mathematics, University of Wisconsin-Eau Claire, Eau Claire, WI 54701, USA}
\email{walkersg@uwec.edu}


\keywords{combinatorial games, games on graphs, topological games}
\subjclass[2010]{Primary 91A46; Secondary 05C57}

\thanks{This work was conducted as part of the Research Experiences for Undergraduate Faculty (REUF) Workshop. REUF is a program of the American Institute of Mathematics (AIM) and the Institute for Computational and Experimental Mathematics (ICERM), made possible by the support from the National Science Foundation (NSF) through Grants NSF-DMS 1620073 to AIM and NSF-DMS 1620080 to ICERM}

\title{The Game of Cycles}

\begin{abstract}
The Game of Cycles, introduced by Su (2020), is played on a simple connected planar graph together with its bounded cells, and players take turns marking edges with arrows according to a sink-source rule that gives the game a topological flavor. The object of the game is to produce a \emph{cycle cell}---a cell surrounded by arrows all cycling in one direction---or to make the last possible move. We analyze the two-player game for various classes of graphs and determine who has a winning strategy. We also establish a topological property of the game:\ that a board with every edge marked must have a cycle cell.
\end{abstract}
\maketitle

    

\section{The Game of Cycles}

A fertile mix of topological and graph-theoretic questions arise from considering the following game, called the Game of Cycles.  

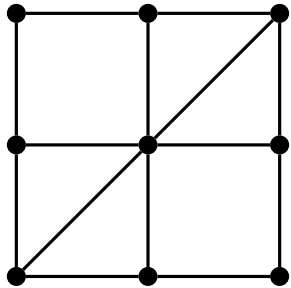
\begin{figure}[H]
	\centering
\begin{tikzpicture}[scale=1.75]
			 \begin{scope}[very thick, every node/.style={sloped,allow upside down}]
			 \Bvertex (A) at (-1,1){};
			 \Bvertex (B) at (0,1){};
			 \Bvertex (C) at (1,1){};
			 \Bvertex (D) at (-1,0){};
			 \Bvertex (E) at (0,0){};
			 \Bvertex (F) at (1,0){};
			 \Bvertex (G) at (-1,-1){};
			 \Bvertex (H) at (0,-1){};
			 \Bvertex (I) at (1,-1){};
			 \draw (A) to (B);
			 \draw (A) to (D);
			 \draw (B) to (C);
			 \draw (B) to (E);
			 \draw (C) to (E);
			 \draw (C) to (F);
			 \draw (D) to (E);
			 \draw (D) to (G);
			 \draw (E) to (F);
			 \draw (E) to (H);
			 \draw (F) to (I);
			 \draw (G) to (E);
			 \draw (G) to (H);
			 \draw (H) to (I);
			 \end{scope}
\end{tikzpicture}
	\caption{A board is a connected planar graph together with its bounded cells.}
	\label{gameboard}
\end{figure}


Start with any simple connected planar graph of dots (vertices) and edges. It divides a bounded region of the plane into \emph{cells}. A graph together with its bounded cells is a \emph{game board}. Two players take turns marking one unmarked edge with an arrow pointing along the edge in one direction or the other.  The arrows must obey a \emph{sink-source rule}:\ players are not allowed to create a \emph{sink} (a dot all of whose edges are all marked pointing toward that dot) or a \emph{source} (a dot all of whose edges are marked pointing away from that dot). Each edge can admit only one arrow, and arrows serve the same function in the game no matter who marks them.
Players must make a move if they have a move available.

\begin{figure}[H]
	\centering
	{
\begin{tikzpicture}[scale=1.75]
			 \begin{scope}[very thick, every node/.style={sloped,allow upside down}]
			 \Bvertex (A) at (-1,1) [label=above left:$a$]{};
			 \Bvertex (B) at (0,1) [label=above left:$b$]{};
			 \Bvertex (C) at (1,1) [label=above left :$c$]{};
			 \Bvertex (D) at (-1,0) [label=above left:$d$]{};
			 \Bvertex (E) at (0,0) [label=above left:$e$]{};
			 \Bvertex (F) at (1,0) [label=above left:$f$]{};
			 \Bvertex (G) at (-1,-1) [label=above left:$g$]{};
			 \Bvertex (H) at (0,-1) [label=above left:$h$]{};
			 \Bvertex (I) at (1,-1) [label=above left:$i$]{};
			 \draw (B) -- node {\midarrow} (A);
			 \draw (D) to (A);
			 \draw (B) -- node {\midarrow} (C);
			 \draw (B) -- node {\midarrow} (E);
			 \draw (E) to (C);
			 \draw (C) to (F);
			 \draw (E) -- node {\midarrow} (D);
			 \draw (D) -- node {\midarrow} (G);
			 \draw (E) to (F);
			 \draw (H) -- node {\midarrow} (E);
			 \draw (F) -- node {\midarrow} (I);
			 \draw (H) -- node {\midarrow} (I);
			 \draw (E) to (G);
			 \draw (G) -- node {\midarrow} (H);
			 \end{scope}
\end{tikzpicture}
}
	\hskip.7in
	%
	{
\begin{tikzpicture}[scale=1.75]
			 \begin{scope}[very thick, every node/.style={sloped,allow upside down}]
			 \Bvertex (A) at (-1,1) [label=above left:$a$]{};
			 \Bvertex (B) at (0,1) [label=above left:$b$]{};
			 \Bvertex (C) at (1,1) [label=above left :$c$]{};
			 \Bvertex (D) at (-1,0) [label=above left:$d$]{};
			 \Bvertex (E) at (0,0) [label=above left:$e$]{};
			 \Bvertex (F) at (1,0) [label=above left:$f$]{};
			 \Bvertex (G) at (-1,-1) [label=above left:$g$]{};
			 \Bvertex (H) at (0,-1) [label=above left:$h$]{};
			 \Bvertex (I) at (1,-1) [label=above left:$i$]{};
			 \draw (A) -- node {\midarrow} (B);
			 \draw (B) -- node {\midarrow} (E);
			 \draw (E) -- node {\midarrow} (D);
			 \draw (D) -- node {\midarrow} (A);
			 \draw (B) to (C);
			 \draw (C) to (E);
			 \draw (F) to (C);
			 \draw (E) to (F);
			 \draw (D) to (E);
			 \draw (G) to (D);
			 \draw (H) to (E);
			 \draw (I) to (F);
			 \draw (E) to (G);
			 \draw (G) to (H);
			 \draw (H) to (I);
			 \end{scope}
\end{tikzpicture}
}
	\caption{The board on the left has a source at $b$ and a sink at $i$. These are not allowed in the Game of Cycles. Also, the square $edgh$ is not a cycle cell, because the path $e \to d \to g \to h \to e$ does not enclose a single cell. In the board on the right, the square $dabe$ is a cycle cell.}
	\label{gamerules}
\end{figure}
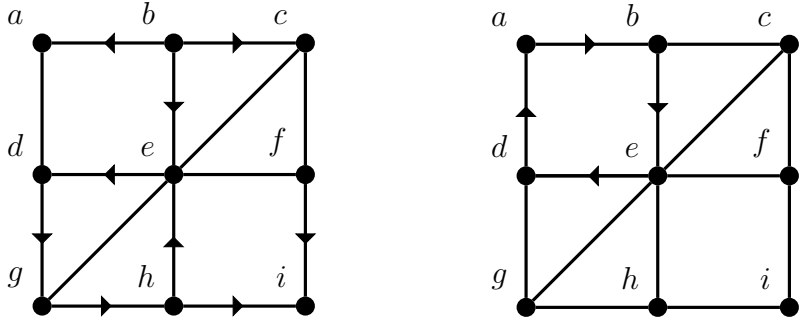

The object of the game is to produce a \emph{cycle cell}, a single cell in the board whose boundary edges are all marked by arrows all cycling in the same direction (either clockwise or counterclockwise). The first person to create a cycle cell wins the game, but if play ends without a cycle cell, the person who makes the last possible move is declared the winner.


The Game of Cycles was introduced in Francis Su's recent book \emph{Mathematics for Human Flourishing} \cite{su2020}, along with two natural questions:\ (1) Who has a winning strategy in this game and what is that strategy? (2) If every edge of a board is marked, must there be a cycle cell? In this paper we shall investigate (1) for various classes of game boards, answer (2) in the affirmative, and suggest several more questions suitable for exploration and play. 

\section{Game Play}

Play this game for a while and you'll notice some interesting things.

Marking edges has important consequences for nearby edges. For example, if you mark an edge with an arrow so that it forms the second-to-last arrow of a potential cycle cell, that move would allow your opponent to complete the cycle and win. We call such a move a \emph{death move} for that cell. We assume that players will want to avoid such moves.
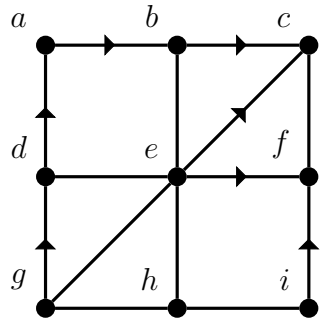
\begin{figure}[H]
	\centering
\begin{tikzpicture}[scale=1.75]
			 \begin{scope}[very thick, every node/.style={sloped,allow upside down}]
			 \Bvertex (A) at (-1,1) [label=above left:$a$]{};
			 \Bvertex (B) at (0,1) [label=above left:$b$]{};
			 \Bvertex (C) at (1,1) [label=above left :$c$]{};
			 \Bvertex (D) at (-1,0) [label=above left:$d$]{};
			 \Bvertex (E) at (0,0) [label=above left:$e$]{};
			 \Bvertex (F) at (1,0) [label=above left:$f$]{};
			 \Bvertex (G) at (-1,-1) [label=above left:$g$]{};
			 \Bvertex (H) at (0,-1) [label=above left:$h$]{};
			 \Bvertex (I) at (1,-1) [label=above left:$i$]{};
			 \draw (A) -- node {\midarrow} (B);
			 \draw (D) -- node {\midarrow} (A);
			 \draw (B) -- node {\midarrow} (C);
			 \draw (B) to (E);
			 \draw (E) -- node {\midarrow} (C);
			 \draw (C) to (F);
			 \draw (D) to (E);
			 \draw (G) -- node {\midarrow} (D);
			 \draw (E) -- node {\midarrow} (F);
			 \draw (H) to (E);
			 \draw (I) -- node {\midarrow} (F);
			 \draw (E) to (G);
			 \draw (G) to (H);
			 \draw (H) to (I);
			 \end{scope}
\end{tikzpicture}
	\caption{A board with some marked edges.}
	\label{gameplay}
\end{figure}

For instance, suppose it is your turn in the game board of Figure \ref{gameplay}. If you marked the edge $eg$ with an arrow from $e$ to $g$ (which we denote $e \to g$), that would be a death move for you, since your opponent could then play $d \to e$ to complete the cycle $e \to g \to d \to e$ and win.

Thus if you encounter a triangle with a single marked edge, avoiding death moves will restrict the direction that you can mark the other two edges of that triangle. If you now mark one of them with an arrow pointing opposite the death move direction, the triangle becomes \emph{uncyclable}, meaning that the triangle can no longer be made a cycle cell. In that case the remaining edge of the triangle is no longer restricted by avoiding a death move in this cell (though there may be a restriction imposed by an adjacent cell). This is an interesting aspect of the game:\ continued play can increase or decrease the number of potential future states of unmarked edges.

An edge where both possible markings are problematic (because the arrow would be a death move or create a sink/source), and where at least one is a death move for an adjacent cell, is said to be \emph{currently unplayable}.
Note that currently unplayable edges may become playable later.  In Figure \ref{gameplay}, the edge $de$ is currently unplayable, since marking it in either direction is a death move for you---your opponent could immediately complete the square cycle cell above it or a triangular cycle cell below it.

The sink-source rule also has some important consequences. For instance, the direction cannot change at a degree 2 vertex:\ if one edge is marked with an arrow pointing towards the vertex, the other edge may only be marked with an arrow pointing away, and vice versa. For example, in Figure \ref{gameplay}, vertex $i$ is a degree 2 vertex, so the edge $hi$ can only be marked with an arrow pointing towards $i$.

More generally, if a vertex has all but one edge marked with arrows pointing towards it and the remaining edge is unmarked, we call that vertex an \emph{almost-sink}.  The unmarked edge of an almost-sink can only be marked with an arrow pointing away from the vertex, or else it would violate the sink-source rule. We can also define an \emph{almost-source} in a similar fashion.  In Figure \ref{gameplay}, vertex $c$ is an almost-sink, and vertex $i$ is an almost-source.

We will call an unmarked edge \emph{markable} if it may be marked with an arrow without violating the sink-source rule.  When an unmarked edge is incident to two almost-sinks (respectively, to two almost-sources), we call such edges \emph{unmarkable} because neither marking of an edge is allowed:\ one direction forces a sink (respectively, source) at one neighboring vertex, and the other choice forces a sink (respectively, source) at the other vertex.    Once an edge becomes unmarkable, it stays unmarkable (in contrast to currently unplayable edges that can later become playable).  In Figure \ref{gameplay}, the edge $cf$ is unmarkable.


\section{Similar Games}

The Game of Cycles is an example of an impartial combinatorial game.  A \emph{combinatorial game} is a game in which two players alternate turns, with a clearly defined set of moves available and no hidden information or element of chance, until one of the players wins. A combinatorial game is called \emph{impartial} when the moves available in any configuration are the same for each player. 

%
%
%

Historically, graphs have been a fertile ground for playing and studying combinatorial games.  In the game of Dots and Boxes, first introduced in \cite{lucas1895arit}, a board has a grid of fixed vertices, and players take turns drawing edges between adjacent horizontal or vertical dots, attempting to completely enclose boxes when possible.  The game ends when all edges have been drawn, and the winner is the player who has enclosed the most boxes.  Another pair of games that can be played on more general graphs are Col and Snort \cite{conway2000numbers}. While they were originally presented as map-coloring games, they can equivalently be described as being played on planar graphs that represent the map, in which players take turns coloring vertices.  Depending on which game is being played, a player is either not allowed to color two adjacent vertices the same color, or not allowed to color two adjacent vertices different colors. In either case, once one player has no legal move available, the other player is declared the winner.  

Each of these games has a key distinction from the Game of Cycles we are studying.  Unlike Dots and Boxes, the Game of Cycles is played on a graph with both vertices and edges already present, with existing edges being labeled.  In addition, when a box is completed in Dots and Boxes, the game continues; once a cycle is formed in the Game of Cycles, the game is over.  The Game of Cycles is more similar to Col or Snort, though there are still significant differences.  Aside from the obvious distinction of marking edges instead of vertices, in Col or Snort each player is only allowed to mark a vertex with their specific color, whereas the Game of Cycles is impartial.  
However, the unmarkable edges resulting from the sink/source rule are similar to the unmarkable vertices often found in a completed game of Col or Snort.

In any combinatorial game, one can ask:\ who has a winning strategy? A \emph{strategy} is a specification of what moves to make in any situation, and since there are no draws, Zermelo's theorem~\cite{zermelo1913anwendung} tells us that one player has a \emph{winning strategy}, a way to force a win no matter how the other player plays. We shall be interested in determining a winning strategy for various classes of boards. 
Since the game is impartial, one might think about appealing to the Sprague-Grundy Theorem \cite{conway2000numbers}, but this is not a hopeful approach because the size of the game tree generally grows exponentially with the number of edges of the board. We shall instead appeal to the structure of the boards to develop winning strategies.







\section{Simple Boards}


In this section, we examine the game on some simple boards in order to develop some intuition into the strategy and mechanics of the game. We begin with playing on a $K_4$ board, the complete graph on four vertices embedded in a plane, and progress to a few other simple boards before proving general strategy theorems for boards with certain symmetries. 

Every planar realization of $K_4$ produces the same board, a collection of $2$-dimensional cells, edges, and vertices which looks like Figure \ref{K4}.
\begin{figure}[H]
			\begin{tikzpicture}[scale=1.75]
			 \begin{scope}[very thick, every node/.style={sloped,allow upside down}]
			 \Bvertex (A) at (0,0) [label=below:$a$]{};
			 \Bvertex (B) at (0,1)  [label=right:$b$] {};
			 \Bvertex (C) at (1,-0.7) [label=right:$c$] {};
			 \Bvertex (D) at (-1,-0.7) [label=left:$d$] {};
			 \draw (A) to (D);
			 \draw (D) to (C);
			 \draw (C) to (A);
			 \draw (B) to (C);
			 \draw (A) to (B);
			 \draw (D) to (B);
			 \end{scope}
			\end{tikzpicture}
	\caption{A $K_4$ board.}
	\label{K4}
\end{figure}
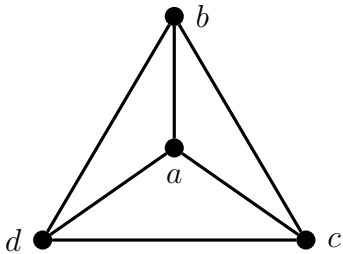

\begin{thm}
\label{k4board}
On a $K_4$ board, Player~2 has a winning strategy. 
\end{thm}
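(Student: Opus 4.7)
My plan is to exhibit an explicit winning strategy for Player~2 by case analysis on Player~1's moves, exploiting the symmetry of the planar $K_4$ board to reduce the number of cases.

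By choosing which vertex of $K_4$ is drawn as central and by the automorphism group's action on directed edges, one may assume without loss of generality that Player~1's opening move is $a \to b$. I propose that Player~2 respond with $c \to d$, the unique edge non-incident to $ab$. This reply touches only cell $acd$ (placing its first arrow) and leaves cells $abc$ and $abd$ with a single arrow each, so it is neither a cycle completion nor a death move.

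Next I would analyze Player~1's second move. Of the eight directed edges still available, five are death moves (creating a second-to-last arrow of some cell); for each, Player~2 wins immediately on move~4 by completing the corresponding cycle cell. The three remaining non-death moves are $c \to b$, $a \to d$, and $d \to b$, and I would specify an explicit Player~2 reply for each. Against $c \to b$, Player~2 plays $a \to d$: this leaves $a$ and $c$ as almost-sources and $b$ and $d$ as almost-sinks, so the two remaining unmarked edges $ac$ and $bd$ each have both endpoints of the same type, making them unmarkable (two almost-sources both demand an incoming arrow; two almost-sinks both demand an outgoing arrow). Then Player~1 has no legal move, no cell is a cycle cell, and Player~2 has made the last move. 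Against $a \to d$ or $d \to b$, Player~2 plays $d \to b$ or $a \to d$ respectively; in each case vertex $d$ becomes fully marked while $a$ becomes an almost-source and $b$ an almost-sink, and the sink/source rule forces the remaining two edges (the edges incident to vertex $c$) into directions that are themselves death moves. Thus Player~1 is compelled to make a death move and Player~2 wins on move~6 by completing the cycle cell in $abc$.

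The main obstacle is essentially bookkeeping: in each branch one must verify that the claimed sink/source constraints really are forced, that no intermediate cycle cell is accidentally created by Player~2's own move, and that Player~2's chosen moves are themselves legal. Because $K_4$ has only six edges and a large automorphism group, these checks are elementary and the case analysis remains short; in every branch Player~2 either makes the remaining edges unmarkable (winning by the last-move rule) or forces Player~1's only legal continuations to be death moves (winning by cycle completion).
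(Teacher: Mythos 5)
Your analysis of the position $\{a\to b,\ c\to d\}$ is correct and is, up to swapping the labels $c$ and $d$, exactly the paper's argument: your count of five death moves and three safe replies checks out, and your two terminal positions are the paper's two boards in Figure~\ref{K4_2} (one ending with two unmarkable edges and Player~2 making the last move, the other leaving Player~1 only death moves). The genuine gap is the opening symmetry reduction. On a $K_4$ board the unbounded face is part of the data, so the board's automorphism group is \emph{not} transitive on directed edges: every automorphism must fix the central vertex $a$ (the unique vertex lying on all three bounded cells), giving a group of order $6$ with \emph{three} orbits of opening moves: $a\to b$ (away from the center), $b\to a$ (toward the center), and $c\to d$ (a boundary edge). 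Moreover, the inner/boundary distinction is invariant under any isomorphism of boards, since an inner edge borders two bounded cells and a boundary edge borders only one; hence ``choosing which vertex is drawn as central'' cannot move a boundary opening into the orbit of $a\to b$, and no board symmetry reverses the arrow on the central edge, so $b\to a$ is not covered either. As written, your proof treats only $3$ of the $12$ possible openings.

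Both missing cases are fixable, but the fixes must be stated. If Player~1 opens on a boundary edge, say $c\to d$, your strategy of replying on the unique non-incident edge still works \emph{provided} Player~2 chooses the direction $a\to b$: this recreates literally the position $\{a\to b,\ c\to d\}$, and since the game depends only on the current position and not on who created it, your case analysis applies verbatim. If Player~1 opens $b\to a$, no reply yields a position equivalent to yours under board automorphisms (the central arrow points the wrong way); you must either redo the short analysis for $\{b\to a,\ c\to d\}$ --- this is what the paper does with its Board~2, declared ``analogous'' --- or invoke the extra symmetry of reversing \emph{every} arrow, which preserves legality (it exchanges sinks and sources) and preserves cycle cells, and which carries $\{b\to a,\ c\to d\}$ to a position you have already handled. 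The paper avoids this pitfall by classifying the \emph{positions} reachable after Player~2's reply (one internal and one external edge marked) up to the board's actual rotations and reflections, which forces it to consider two cases rather than one.
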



\begin{proof}
We describe a winning strategy for Player 2. 
After Player~1 makes the first move, Player~2 should respond by marking the unique edge that is not incident to the the first marked edge. This produces a board with exactly one external edge and one internal edge marked, which, up to rotation or reflection, must look like either Board 1 or Board 2:

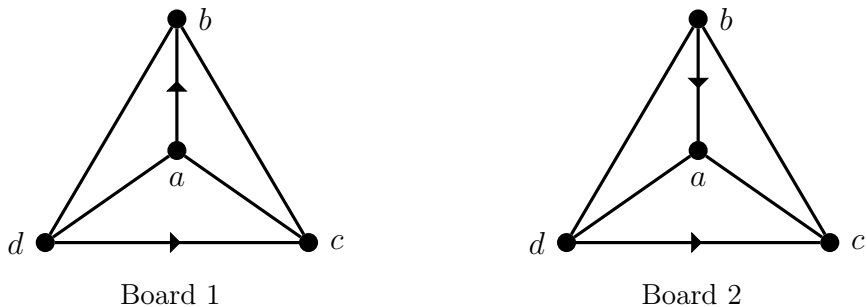
\begin{figure}[H]
	\captionsetup[subfigure]{labelformat=empty}
	\centering
	\subcaptionbox{Board 1}{
			\begin{tikzpicture}[scale=1.75]
			 \begin{scope}[very thick, every node/.style={sloped,allow upside down}]
			 \Bvertex (A) at (0,0) [label=below:$a$]{};
			 \Bvertex (B) at (0,1)  [label=right:$b$] {};
			 \Bvertex (C) at (1,-0.7) [label=right:$c$] {};
			 \Bvertex (D) at (-1,-0.7) [label=left:$d$] {};
			 \draw (A) to (D);
			 \draw (D)-- node {\midarrow} (C);
			 \draw (C) to (A);
			 \draw (B) to (C);
			 \draw (A) -- node {\midarrow} (B);
			 \draw (D) to (B);
			 \end{scope}
			\end{tikzpicture}
	}
	\hskip.7in
	\subcaptionbox{Board 2}{
			\begin{tikzpicture}[scale=1.75]
			\begin{scope}[very thick, every node/.style={sloped,allow upside down}]
			\Bvertex (A) at (0,0) [label=below:$a$]{};
			\Bvertex (B) at (0,1)  [label=right:$b$] {};
			\Bvertex (C) at (1,-0.7) [label=right:$c$] {};
			\Bvertex (D) at (-1,-0.7) [label=left:$d$] {};
			\draw (A) to (D);
			\draw (D) -- node {\midarrow} (C);
			\draw (C) to (A);
			\draw (B) to (C);
			\draw (B) -- node {\midarrow} (A);
			\draw (D) to (B);
			\end{scope}
			\end{tikzpicture}
	}
	\caption{Possible boards
	resulting from the first moves of Players~1 and 2.}
	\label{K4_1}
\end{figure}

Consider first Board 1, the case where the edge $ab$ is marked with an arrow $a \to b$. Note that the edge $ad$ is currently unplayable:\ marking it $a \to d$ is a death move for Player~1 because the bottom cell can be immediately cycled by Player~2, and marking it $d \to a$ is a death move Player~1 because the left cell can immediately be cycled by Player 2. Thus there are three available edges on which Player~1 can play on their second move. Note that on each of these edges, there is only one move that is not a death move, since all three cells already have a cycle direction chosen by the first two moves. After Player~1 makes their move, Player~2 can ensure that the board becomes one of the following two:

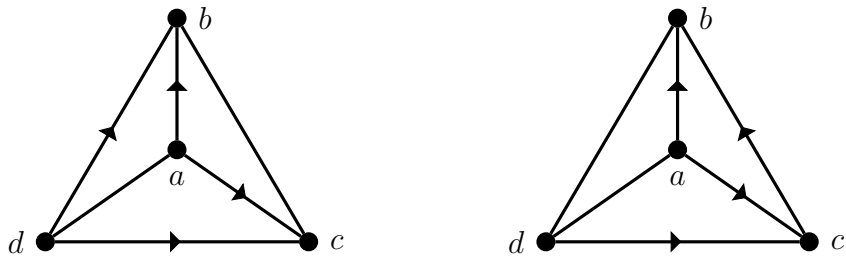
\begin{figure}[H]
	\centering
		\begin{tikzpicture}[scale=1.75]
		\begin{scope}[very thick, every node/.style={sloped,allow upside down}]
		\Bvertex (A) at (0,0) [label=below:$a$]{};
		\Bvertex (B) at (0,1)  [label=right:$b$] {};
		\Bvertex (C) at (1,-0.7) [label=right:$c$] {};
		\Bvertex (D) at (-1,-0.7) [label=left:$d$] {};
		\draw (A) to (D);
		\draw (D)-- node {\midarrow} (C);
		\draw (A) -- node {\midarrow} (C);
		\draw (B) to (C);
		\draw (A) -- node {\midarrow} (B);
		\draw (D) -- node {\midarrow}  (B);
		\end{scope}
		\end{tikzpicture}
	\hskip.7in
		\begin{tikzpicture}[scale=1.75]
		\begin{scope}[very thick, every node/.style={sloped,allow upside down}]
		\Bvertex (A) at (0,0) [label=below:$a$]{};
		\Bvertex (B) at (0,1)  [label=right:$b$] {};
		\Bvertex (C) at (1,-0.7) [label=right:$c$] {};
		\Bvertex (D) at (-1,-0.7) [label=left:$d$] {};
		\draw (A) to (D);
		\draw (D) -- node {\midarrow} (C);
		\draw (A) -- node {\midarrow} (C);
		\draw (C) -- node {\midarrow}  (B);
		\draw (A) -- node {\midarrow} (B);
		\draw (D) to (B);
		\end{scope}
		\end{tikzpicture}
	\caption{Possible boards resulting from the second non-death moves  of  Players~1 and 2 on Board 1 in Figure~\ref{K4_1}.}
	\label{K4_2}
\end{figure}
In the first case, the game is over because the remaining edges are unmarkable due to the sink-source rule. In the second case, only death moves for Player~1 remain. Therefore Player~2 wins in both cases.

The argument for Board 2, in which edge $ab$ is marked $b \to a$, is analogous, leading to Player 2 winning. Thus Player 2 has a winning strategy. 
\end{proof}

Now let's consider a $C_n$ board:\ a single cell whose boundary is a \emph{cycle graph} $C_n$ with $n$ vertices and $n$ edges alternating along the boundary of the cell.

 \begin{figure}[H]
	\centering
\begin{tikzpicture}[scale=1]
			 \begin{scope}[very thick, every node/.style={sloped,allow upside down}]
			 \Bvertex (A) at (2,0){};
			 \Bvertex (B) at (1.414, 1.414){};
			 \Bvertex (C) at (0,2){};
			 \Bvertex (D) at (-1.414,1.414){};
			 \Bvertex (E) at (-2,0){};
			 \Bvertex (F) at (-1.414,-1.414){};
			 \Bvertex (G) at (0,-2){};
			 \Bvertex (H) at (1.414,-1.414){};
			 \draw (A) -- node {\midarrow} (B);
			 \draw (B) to (C);
			 \draw (D) -- node {\midarrow} (C); 
			 \draw (E) -- node {\midarrow} (D);
			 \draw (E) to (F);
			 \draw (F) to (G); 
			 \draw (G) to (H);
			 \draw (H) to (A);
			 \end{scope}
\end{tikzpicture}
	\caption{A cycle board with 8 vertices and a few marked edges.}
	\label{cyclegraph}
\end{figure}
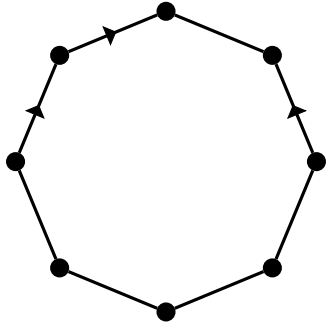

The following lemma will be useful to analyze a $C_n$ board.
\begin{lem}
\label{paritylem}
If a $C_n$ board has no markable edges, the number of unmarkable edges must be even.
\end{lem}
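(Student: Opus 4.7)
The plan is to exploit the cyclic structure of a $C_n$ board together with the rigidity the sink-source rule imposes on maximal runs of marked edges. First, under the hypothesis that there are no markable edges, every unmarked edge is unmarkable, so it suffices to prove that the number of unmarked edges is even. If every edge is marked, this count is $0$ and we are done, so assume there is at least one unmarked edge.

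My first step is to show that no vertex can have both its edges unmarked. If some vertex $v$ did, then $v$ would be neither an almost-sink nor an almost-source, so for each edge incident to $v$, at least one of its two possible orientations would not force a sink or a source at $v$; hence that edge would be markable, contradicting the hypothesis. Consequently, two unmarked edges cannot be adjacent on the cycle, so the unmarked edges separate the marked edges into $k$ maximal arcs of consecutively marked edges, where $k$ is the number of unmarked edges. My second step is to note that every interior vertex of such an arc has both its edges marked, so by the sink-source rule exactly one arrow points in and one points out; this propagates along the arc, giving it a coherent orientation (either ``forward'' or ``backward'' relative to a fixed traversal of the cycle). Accordingly, of the two boundary vertices of the arc, one is an almost-source and the other is an almost-sink, determined by the arc's orientation.

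The heart of the argument is the third step. Consider an unmarked edge $e$ lying between two consecutive arcs $A$ and $A'$. Let $v$ be the endpoint of $e$ shared with $A$ and $v'$ the endpoint shared with $A'$. A direct case check on the four combinations of orientations of $A$ and $A'$ shows that $v$ and $v'$ are both almost-sinks, or both almost-sources, precisely when $A$ and $A'$ have opposite orientations along the cycle. Since $e$ is unmarkable, this must be the case; since this holds for every gap between consecutive arcs, the orientations of the arcs strictly alternate around the cycle. Alternation around a cyclic sequence of $k$ objects forces $k$ to be even, which is exactly the claim.

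The main obstacle is the bookkeeping in the third step: keeping straight which end of an oriented arc is an almost-source and which is an almost-sink, and verifying that the unmarkability condition at the sandwiched edge translates cleanly into opposite orientations of the flanking arcs. Once that short case analysis is recorded, the parity conclusion is immediate from the fact that one cannot two-color a cycle with an odd number of vertices so that adjacent colors always differ.
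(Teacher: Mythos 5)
Your proposal is correct and follows essentially the same route as the paper's proof: both decompose the cycle into maximal uni-directional chains of marked edges separated by unmarked (hence unmarkable) edges, observe that the sink--source rule forces the orientation to flip at each unmarkable edge, and conclude by parity of direction changes around the closed cycle. Your third step merely makes explicit, via the almost-sink/almost-source case check, the alternation that the paper asserts directly, so there is no substantive difference.
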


Note that to say a board has no markable edges means that the game has played out as far as it can play--no further moves are possible.

\begin{proof}
We first show if every there are no markable edges, then there cannot be two unmarked edges adjacent to one another. For if there were adjacent unmarked edges $ab$ and $bc$, then vertex $b$ would not be an almost-sink or almost-source, hence $bc$ (for instance) could not be unmarkable. Thus $bc$ is markable, a contradiction.

Next notice that two adjacent marked edges must be pointing in the same direction (clockwise or counterclockwise), to keep the vertex in between from being a sink or a source.

Thus the unmarkable edges must divide the boundary of the $C_n$ board into uni-directional chains of marked edges, and the direction must change at each unmarked edge. Since starting at some edge and moving around the circle brings us back to the same edge, the number of direction changes must be even.
\end{proof}

\begin{thm}
\label{paritythm}
The play on a $C_n$ board is entirely determined by parity.
If $n$ is odd, Player~1 wins. If $n$ is even, Player~2 wins.
\end{thm}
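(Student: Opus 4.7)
The plan is to show something slightly stronger than advertised: the winner is forced by parity \emph{regardless of strategy}. The engine is Lemma~\ref{paritylem} together with the observation that the unique cell of $C_n$ has all $n$ edges on its boundary, so completing the cycle cell requires marking every edge.

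First I would dichotomize the ways a game on $C_n$ can end. Either (A) the single cycle cell is completed and the completing player wins, or (B) play terminates because no markable edges remain and the last mover wins. In Case~(A), since the unique cell has all $n$ boundary edges, it can only be completed on the $n$th move; hence the winner is Player~1 precisely when $n$ is odd and Player~2 precisely when $n$ is even. In Case~(B), I would invoke Lemma~\ref{paritylem}: when play stops, the number of unmarkable edges is even, so the number of \emph{marked} edges equals $n$ minus an even number and therefore has the same parity as $n$; the last (winning) mover is again Player~1 exactly when $n$ is odd and Player~2 exactly when $n$ is even.

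Combining the cases: in every possible play of the game, the total number of moves has the same parity as $n$, and the mover on the final turn wins. Termination is immediate since each move marks a previously unmarked edge and there are only $n$ of them.

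The only subtle point, and what I would flag as the main obstacle, is verifying that Cases~(A) and~(B) really partition all possible endgames. Specifically, one must rule out the scenario where every edge is marked but no cycle cell is formed; here the sink-source rule rescues us, because on $C_n$ every vertex has degree~$2$, so if all edges are marked then each vertex must have exactly one incoming and one outgoing arrow, forcing all arrows to cycle in a common direction and thus producing a cycle cell. With that observation in hand, the theorem reduces to the parity bookkeeping described above.
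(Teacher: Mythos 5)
Your proof is correct and takes essentially the same approach as the paper's: both reduce the theorem to Lemma~\ref{paritylem}, observing that the number of moves made equals $n$ minus the (even) number of unmarkable edges, so the winner --- whoever moves last, whether by completing the cycle or by making the final legal move --- is determined by the parity of $n$. Your explicit verification that a fully marked $C_n$ must form a cycle cell (via the degree-$2$ sink-source argument) is a detail the paper leaves implicit, but it does not alter the route; in fact the parity bookkeeping goes through even without it, since an all-marked board would still award the win to the $n$th mover.
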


Note that saying a player `wins' is stronger than saying a player has a winning strategy---gameplay is deterministic and the outcome does not depend on the sequence of moves made.

\begin{proof}
Since there is only one cell, and a cycle cell around it happens only if there is no unmarkable edge, the winner will be determined solely by the parity of the number of moves made. The number of moves made is equal to the number of edges minus the number of unmarkable edges in the final board state. The result follows since the number of unmarkable edges must be even.  So we see that as long as players make legal moves, there is no real ``strategy'' needed at all.
\end{proof}




\begin{thm}
\label{ngonchordthm}
Let $n \geq 4$.
Consider a $C_n$ board subdivided into two cells by one internal chord connecting two non-adjacent vertices. 
If $n$ is even, then Player~1 has a winning strategy, and if $n$ is odd, then Player~2 has a winning strategy. 
\end{thm}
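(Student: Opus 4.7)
The plan is to exhibit explicit winning strategies in each of the two parity cases, using the key structural fact that once the chord is marked, it designates a unique potential cycle-direction for each of the two cells, and the sink/source rule at the degree-$2$ vertices along each arc creates a forcing chain that determines the direction of the remaining arc-edges.

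Let $c$ be the chord with endpoints $u, v$, splitting the $n$-cycle into arcs of lengths $p$ and $q$ (with $p + q = n$), so the board has $n+1$ edges total. The core device is a pairing strategy executed after $c$ is marked. When Player~2 (in the even-$n$ case) marks an arc edge $e$ incident to $u$ or $v$, Player~1 responds by marking the next arc edge along the same arc in the direction forced by the shared degree-$2$ vertex; more generally, Player~1 pairs each arc edge with a partner and continues the forcing chain. A case analysis shows these responses are always legal (no sink/source, no immediate death move) and never let Player~2 close a cycle cell. Since the total edge count $n+1$ is odd when $n$ is even, and since an analogue of Lemma~\ref{paritylem} shows the number of unmarkable edges on any cycle-cell-free terminal state is even, Player~1 is guaranteed to make the last move of the game.

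For the $n$ odd case ($n+1$ even), Player~2's strategy is reactive. If Player~1 opens with $c$, then Player~2 steps into the even-$n$ role on the remaining arc edges and wins by the same parity-plus-pairing argument. If Player~1 instead opens with an arc edge, Player~2 responds by marking $c$ in the direction that makes the containing cell uncyclable; the forcing chain along the opposite arc then determines the rest of the game and, again by parity, Player~2 makes the last move (or is the one to complete the lone cycle in the far cell).

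The main obstacle will be verifying the pairing responses in the sub-cases where the arcs have odd individual lengths (for instance, $n$ even with $p$ and $q$ both odd). There, the adjacent-edge pairing within a single arc leaves one edge unpaired, so at least one pair must cross a chord endpoint; checking that this cross-endpoint response is always legal, never a death move, and never hands the opponent a cycle cell is the technical heart of the case analysis. An auxiliary parity lemma for this two-cell board---counting unmarkable edges along the shared boundary, tracking the fact that the chord contributes to each cell's direction once---pins down the terminal move count and closes out the argument.
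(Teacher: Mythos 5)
Your proposal rests on two claims, and the one doing all the work is false as stated. You assert that ``an analogue of Lemma~\ref{paritylem} shows the number of unmarkable edges on any cycle-cell-free terminal state is even.'' On this two-cell board that is not true: unlike the plain $C_n$ board, the parity of the unmarkable edges at the end of play is \emph{not} an invariant of the board but depends on how the game was played. Concretely, with chord $a\to b$, if play produces $a\to c$, $e\to a$, $b\to d$, $b\to f$ (the paper's Figure~\ref{ngonchordloss}), then exactly one of the two degree-$3$ vertices hosts a direction change; since each cell must have an even number of direction changes and all other direction changes occur at unmarkable edges (every other vertex has degree $2$), one cell then contains an odd number of unmarkable edges and the other an even number, so the total is odd. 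Evenness of the unmarkable edges holds precisely when there is a direction change at \emph{both} chord endpoints $a$ and $b$, and forcing that configuration is the actual content of the winning strategy --- it is what your argument needs to establish, not something it may assume. The paper's strategy does exactly this: Player~$W$ gets the chord marked (directly, or reactively in the odd case), then ensures one edge incident to $a$ and one edge incident to $b$ are marked against the orientation the chord implies (always possible, since each endpoint has two arc edges), and thereafter merely avoids death moves; the parity bookkeeping then delivers the win.

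Your second claim --- that the pairing responses (``mark the next arc edge along the same arc in the direction forced by the shared degree-$2$ vertex'') are always legal and never death moves --- is also not established, and it fails in general, not just in the odd-arc-length sub-case you flag. Responding in the forced direction extends a uni-directional chain around one cell; once that chain covers all but two edges of the cell, your prescribed response is itself a death move, handing your opponent the cycle. So the ``technical heart'' you defer is not a technicality: without replacing the pairing device by something that controls direction changes at the chord endpoints (as the paper does), both halves of your argument --- the safety of the responses and the terminal parity count --- remain open.
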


 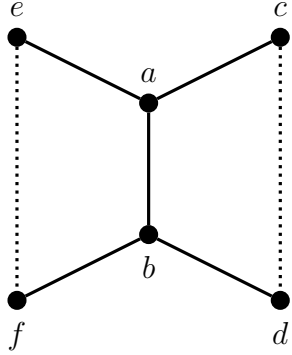
\begin{figure}[H]
	\centering
\begin{tikzpicture}[scale=1.75]
			 \begin{scope}[very thick, every node/.style={sloped,allow upside down}]
			 \Bvertex (A) at (0,0.5) [label=above:$a$] {};
			 \Bvertex (B) at (0,-0.5) [label=below:$b$] {};
			 \Bvertex (C) at (1,1)  [label=above:$c$] {};
			 \Bvertex (D) at (1,-1)  [label=below:$d$] {};
			 \Bvertex (E) at (-1,1)  [label=above:$e$] {};
			 \Bvertex (F) at (-1,-1)  [label=below:$f$] {};
			 \draw (A) to (B);
			 \draw (A) to (C); 
			 \draw (B) to (D); 
			 \draw (A) to (E);
			 \draw (B) to (F); 
			 \draw[dotted] (C) to (D);
			 \draw[dotted] (E) to (F);
			 \end{scope}
\end{tikzpicture}
	\caption{A cycle board with an internal chord.}
	\label{ngonchord}
\end{figure}
\begin{proof}
Call the player who has a winning strategy Player~$W$.
To prove the theorem, we will show that,
unless $W$ wins because the other player plays a death move prematurely, Player~$W$ can force unmarkable edges to come in pairs on this graph by the time the game ends. 
Then the result follows from the parity of the number of edges marked, as in the proof of Theorem \ref{paritythm} above, and because if $n$ is even, the total number of edges is odd, and if $n$ is odd then the total number of edges is even.

The region immediately surrounding the chord looks like Figure \ref{ngonchord}
where the dotted lines represent identification or a collection of edges joined by vertices of degree 2.
The graph has exactly two degree 3 vertices $a$ and $b$, the endpoints of this chord.

If $n$ is even, we claim Player~$W$ is Player~1, and their first move should be to mark the chord $ab$ with an arrow in either direction. The direction of this arrow will suggest an orientation of the boundary of each of the two cells that border it,
one clockwise and the other counterclockwise. For instance, suppose Player 1 marks the chord 
with an arrow $a \to b$.
Player~1's next two plays should guarantee that one edge incident to $a$ and one edge incident to $b$ are each marked in the direction opposite to that implied by the chord (if the other player doesn't make those moves). This means placing an arrow either $a \to c$ or $a \to e$, 
and placing an arrow either $d \to b$ or $f \to b$.
Such moves are always possible at each degree 3 vertex, since even if Player~2 plays `with' the chord on one of the edges incident to that vertex, Player~1 can play as desired on the other edge incident to that vertex on their next turn.

If $n$ is odd, we claim Player~$W$ is Player~2, and a winning strategy relies on the same key idea of marking edges in direction opposite that implied by the chord.  If Player~1 marks the chord on their first turn, Player~2 should immediately play on one of the incident edges in the direction opposite to that implied by the chord.  If Player~1 instead plays initially on an edge incident to the chord (for example $c \to a$) 
or incident to one of those edges (for example, the other edge incident to $c$, say $x \to c$), Player~2 should mark the chord in the direction opposite that implied by that edge (in this case, $b \to a$).  After the next move by Player~1, or following any other initial Player~1 move, Player~2 can now follow the same strategy that Player~1 employed above to ensure that there are edges incident to both $a$ and $b$ with arrows marked in directions opposite to that implied by the chord.

At this point, whether $n$ is odd or even, either the left cell or the right cell has a direction change at $a$ (but not both, else $a$ would be a sink or source). Similarly, either (but not both) the left or right cell has a direction-change at $b$.  A key observation is that when the game concludes, if there isn't a completed cycle, all the direction changes in a cell must occur at ummarkable edges or possibly at $a$ or $b$, because there cannot be a direction change at any other vertex (else it would be a sink or source, being of degree 2). As in the proof of Lemma~\ref{paritylem}, the total number of direction changes within a cell must be even.

So after initially ensuring a direction change at $a$ and at $b$, Player~$W$'s winning strategy is to complete a cycle if the other player plays a death move, and otherwise Player~$W$ should just avoid making a death move until the game concludes with no markable edges. Such a move will always be possible because if the only option for Player~$W$ in one cell is to play a death move, then all but 2 edges in that cell are marked and the other cell must have direction changes at both $a$ and $b$. If this is Player~$W$'s move, a comparison of the parity of the total number of edges and the number of moves played will show that there must be an odd number of unmarked edges in the other cell. These cannot all be unmarkable (else the number of direction changes in that cell would be odd).

Thus we may as well assume that the game ends when there are no markable edges.  At that time, there are exactly two direction changes at $a$ and $b$, and there must be an even number of direction changes in each cell. Together, these observations imply that any unmarkable edges come in pairs, as we set out to show.

\end{proof}

We point out that unlike Theorem~\ref{paritythm}, the 
game play on the board of Theorem~\ref{ngonchordthm} is not deterministic.  
In non-optimal play, if there is just a single direction change at the degree 3 vertices, as in Figure~\ref{ngonchordloss}, then the same argument illustrates that there will be an odd number of unmarkable edges in a finished game, so the player who has a winning strategy will not win in this case.

 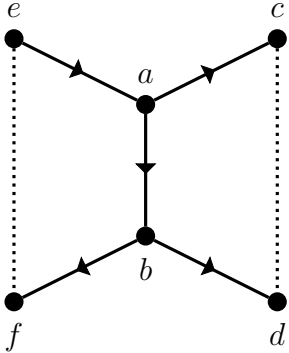
\begin{figure}[H]
	\centering

\begin{tikzpicture}[scale=1.75]
			 \begin{scope}[very thick, every node/.style={sloped,allow upside down}]
			 \Bvertex (A) at (0,0.5) [label=above:$a$] {};
			 \Bvertex (B) at (0,-0.5) [label=below:$b$] {};
			 \Bvertex (C) at (1,1)  [label=above:$c$] {};
			 \Bvertex (D) at (1,-1)  [label=below:$d$] {};
			 \Bvertex (E) at (-1,1)  [label=above:$e$] {};
			 \Bvertex (F) at (-1,-1)  [label=below:$f$] {};
			 \draw (A)-- node {\midarrow} (B);
			 \draw (A) -- node {\midarrow} (C); 
			 \draw (B)-- node {\midarrow} (D); 
			 \draw (E) -- node {\midarrow} (A);
			 \draw (B)-- node {\midarrow} (F); 
			 \draw[dotted] (C) to (D);
			 \draw[dotted] (E) to (F);
			 \end{scope}
\end{tikzpicture}
	\caption{A board played without using the winning strategy.}
	\label{ngonchordloss}
\end{figure}

The game play on the board of Theorem~\ref{ngonwithflap} below is also not deterministic.

\begin{thm}
\label{ngonwithflap}
Let $n \geq 3$. Consider a board with two cells, formed by a cycle graph $C_n$ with one additional internal vertex $a$ that has edges to exactly two adjacent vertices $b$ and $c$ on the cycle $C_n$.
If $n$ is odd, then Player~1 has a winning strategy and if $n$ is even, then Player~2 has a winning strategy.
\end{thm}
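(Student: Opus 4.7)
The plan is to adapt the proof of Theorem~\ref{ngonchordthm} to this board. The key observation is that the length-two path $b$-$a$-$c$ functions as a \emph{virtual chord} joining the two degree-$3$ vertices $b$ and $c$: since $a$ has degree $2$, once either edge $ab$ or $ac$ is marked, the sink-source rule at $a$ determines the orientation of the other edge whenever it is played. Thus marking one of $ab$ or $ac$ fixes a circulation direction in each of the two cells (the triangle $abc$ and the $(n{+}1)$-gon on the other side of the virtual chord), just as marking the real chord did in Theorem~\ref{ngonchordthm}.

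First I would identify Player~$W$ by the parity of the total number of edges, which is $n+2$. Provided no cycle cell is formed and unmarkable edges come in pairs, the player who makes the last move has the same parity as $n$, so Player~$1$ is $W$ when $n$ is odd and Player~$2$ is $W$ when $n$ is even, matching the statement.

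Next I describe Player~$W$'s opening. When $W$ is Player~$1$, they mark one of $ab$ or $ac$ (say $a\to b$), committing the circulation at $a$. When $W$ is Player~$2$, they respond to Player~$1$'s first move: if Player~$1$ plays on $ab$ or $ac$, the circulation at $a$ is already committed; otherwise Player~$1$'s first mark lies on $bc$ or along the long path, and Player~$2$ marks $ab$ in the direction giving an implied circulation opposite to Player~$1$'s edge at the nearest degree-$3$ vertex. On their subsequent moves, $W$ secures a direction change at $b$ by marking one of $bc$ or $bv_1$ in the direction opposite to the chord-implied circulation, and analogously secures a direction change at $c$. As in Theorem~\ref{ngonchordthm}, each degree-$3$ vertex offers two candidate moves, so the opponent can block at most one per turn, and the chosen move makes the cell in which the direction change occurs uncyclable, hence is not a death move.

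Once direction changes at both $b$ and $c$ are in place, $W$ simply avoids death moves until play ends. By the direction-change parity argument used in the proof of Theorem~\ref{ngonchordthm} (itself an application of Lemma~\ref{paritylem}), the total number of direction changes within each cell is even; since each of $b$ and $c$ contributes exactly one such change, the unmarkable edges in each cell must pair up. Hence the number of moves played has the same parity as $n+2$, so Player~$W$ makes the final move. The hard part will be the feasibility check that $W$'s direction-change moves at $b$ and $c$ are always available and never forced to be death moves---because the triangle $abc$ has only three edges, its dynamics are tight, and $W$ may have to choose carefully between the two candidate moves at each of $b$ and $c$ (e.g.\ placing the change on $bc$ versus $bv_1$) so as not to hand the opponent an immediate triangle cycle; this case analysis parallels the corresponding step in Theorem~\ref{ngonchordthm} but demands some extra care for the tiny triangle cell.
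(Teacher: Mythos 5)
Your identification of the winner by parity is right, but your strategy is genuinely different from the paper's, and it breaks at exactly the point you defer as ``the hard part.'' The core difficulty is that, unlike in Theorem~\ref{ngonchordthm}, the invariant your proof rests on---a direction change at $b$ \emph{and} a direction change at $c$---cannot always be achieved. Any direction change at $c$ requires the edge $ac$ to be marked, since both cell-edge pairs at $c$, namely $\{ca,cb\}$ and $\{ca,cv\}$ (where $v$ is $c$'s outer neighbour other than $b$), contain $ca$; likewise at $b$. So both virtual-chord edges must eventually be marked. But once one is marked, say $a\to b$, the sink-source rule at $a$ forces the other to be $c\to a$, and playing $c\to a$ is a death move unless $bc$ already carries $c\to b$: the triangle has only three edges, so a completed virtual chord is an almost-cycle. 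Concretely, suppose you open with $a\to b$ and your opponent replies $c\to v$ (legal and safe for every $n\geq 3$). If you now secure the direction change at $b$ by playing $c\to b$, then $a$ and $c$ are both almost-sources, so $ac$ is unmarkable forever and no direction change at $c$ can ever exist. If instead you play $w\to b$ (with $w$ the outer neighbour of $b$), then $c\to b$ is permanently illegal (it would make $b$ a sink), so $c\to a$ remains a death move for the rest of the game and the direction change at $c$ is again unreachable. In these lines Player~1 can in fact still win (one can check the $n=3$ case ends after three moves with $ac$ and the remaining outer edge both unmarkable), but not by the argument you give.

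The parity accounting also needs repair, because the two cells share the \emph{two} edges $ab$, $ac$ rather than a single chord: a shared unmarkable edge produces a direction change in both cells while counting only once toward the total, so ``unmarkable edges pair up within each cell'' does not yield an even total. Summing the two per-cell parity identities gives instead that the total number of unmarkable edges is congruent mod $2$ to $u_{ab}+u_{ac}+D_b+D_c$, where $u_{ab},u_{ac}$ indicate whether the shared edges end unmarkable and $D_b,D_c\in\{0,1\}$ count direction changes at $b,c$; the endgames above legitimately have $u_{ac}=1$, $D_b=1$, $D_c=0$. The paper sidesteps all of this with a shorter and quite different strategy: Player~$W$ simply makes the triangle \emph{uncyclable} within their first two moves (e.g.\ Player~1 opens $b\to c$; any safe reply by Player~2 either makes the triangle uncyclable or allows Player~1 to do so with $a\to c$ or $b\to a$), after which every remaining move is effectively a move on the single $(n+1)$-edge cell, and the pairing of unmarkable edges follows from Lemma~\ref{paritylem} applied to that one cell together with the edge count $n+2$. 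To salvage your route you would have to replace your invariant by the combined quantity $u_{ab}+u_{ac}+D_b+D_c$ and show $W$ can keep it even through all of the triangle's death-move constraints---substantially more work than the chord case, and more work than the paper's reduction.
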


 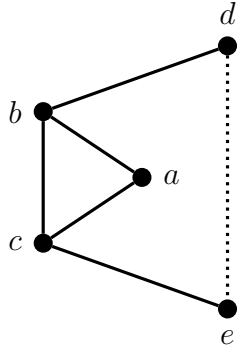
\begin{figure}[H]
	\centering
\begin{tikzpicture}[scale=1.75]
			 \begin{scope}[very thick, every node/.style={sloped,allow upside down}]
			 \Bvertex (A) at (-0.25,0) [label=right:$a$]{};
			 \Bvertex (B) at (-1,0.5) [label=left:$b$] {};
			 \Bvertex (C) at (-1,-0.5) [label=left:$c$] {};
			 \Bvertex (D) at (0.4,1)  [label=above:$d$] {};
			 \Bvertex (E) at (0.4,-1)  [label=below:$e$] {};
			 \draw (A) to (B);
			 \draw (A) to (C);
			 \draw (B) to (C); 
			 \draw (B) to (D);
			 \draw (C) to (E); 
			 \draw[dotted] (D) to (E);
			 \end{scope}
\end{tikzpicture}
	\caption{A cycle board with an additional internal vertex adjoining adjacent vertices on the cycle.}
	\label{Cflap1}
\end{figure}


\begin{proof}
The outcome of the game will be determined by what happens in the vicinity of the degree 3 vertices $b$ and $c$ (see Figure~\ref{Cflap1}).
Vertices $d$ and $e$ are either identified (in the case $n=3$) or
are connected by a chain of edges and degree 2 vertices (in the case $n>3$), which we've represented with a dotted line. Note that this board contains two cells:\ the triangular cell $abc$ and a larger cell with $n+1$ edges. 

The winning strategy here follows a pretty simple idea:\
don't make a death move. As before, let Player~$W$ denote the player who has a winning strategy. 
We'll show, as in Theorem \ref{ngonchordthm}, that Player~$W$ can avoid making a death move until the game is in a state where the winner is determined by parity. 
It will be sufficient to show that Player~$W$ can ensure that the triangular cell is uncyclable, and from that point on, finishing the game is the same as concluding the game on the larger cell alone.


If Player~$W$ is Player~1, then Player~$1$ should play $b \to c$ on their first move. If Player~$2$ makes a death move in the triangular cell, then Player~1 can complete the cycle and win.  Otherwise, if Player~2 initially plays anything else on the triangular cell (suppose it is $a \to c$), that move will make the triangular cell uncyclable, 
and else if Player~2 initially marks an edge on the outer boundary of the larger cell, 
Player~1 can always mark either $a \to c$ or $b \to a$ to make the triangular cell uncyclable. From this point, the game reduces to the game on the larger cycle, because now a move is valid on the original board if and only if it is valid on the board with only the larger cell.


Now suppose Player~$W$ is Player~2. Then if Player~1 plays the chord $b \to c$ on their first move (an argument for $c \to b$ is similar), Player~2 should respond by marking $a \to c$ to make the triangular cell uncyclable.  And if Player~1 initially plays another edge on the triangular cell (suppose it is $a \to c$), then Player~2 should play so as to make the triangular cell uncyclable (here, $bc$). Finally, if Player~1 initially plays any other edge (for example, $c \to e$), Player~2 should mark the chord $b \to c$ in a direction opposite that implied by Player~1's move (in this case, $c \to b$).  Doing so will ensure that a subsequent move is valid on the original board if and only if it is valid on the board with only the larger cell. Player~2's second move should then be chosen to make the triangular cell uncyclable (unless Player~1's second move is a death move).


At this point in either scenario, play proceeds as if it were only on the larger cell, a cycle of size $n+1$.  The result follows by noting that play ends when there are no unmarkable edges, they must come in pairs, and the parity of the number of edges played determines the winner. 
\end{proof}




\section{Symmetric Boards}

We now consider boards with various kinds of symmetry.


For example, suppose we consider a board with $180^{\circ}$ rotational symmetry. The $180^{\circ}$ rotational symmetry of the board ensures that every vertex $v$ has a \emph{partner vertex} $v'$ that it is paired with under a $180^{\circ}$ rotation. The only case where $v$ is identical to $v'$ is if $v$ is a vertex at the center of the rotation.  Similarly, every edge has a \emph{partner edge} that it corresponds to under this symmetry of this board. 

\begin{figure}[H]
    \centering
\begin{tikzpicture}[scale=1.25]
		\begin{scope}[very thick, every node/.style={sloped,allow upside down}]
		\Bvertex (A) at (-1,1) [label=left:$a$]{};
		\Bvertex (B) at (1,1)  [label=right:$b$] {};
		\Bvertex (C) at (1,-1) [label=right:$c$] {};
		\Bvertex (D) at (-1,-1) [label=left:$d$] {};
		\Bvertex (E) at (0,0) [label=above:$e$] {};
		\draw (A) to (B);
		\draw (B) to (C);
		\draw (C) to (D);
		\draw (D) to (A);
		\draw (A) to (C);
		\draw (D) to (B);
		\end{scope}		
\end{tikzpicture}
    \caption{A board with $180^\circ$ rotational symmetry.}
    \label{fig:rot}
\end{figure}
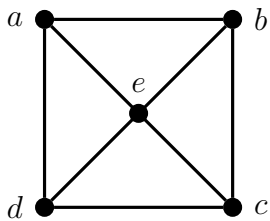

For example, looking at Figure \ref{fig:rot} above, we see that vertex $a$ and vertex $c$ are partners, vertex $b$ vertex $d$ are partners, edges $ab$ and $cd$ are partners, and edges $de$ and $be$ are partners.  This suggests that Player~2 might have a ``mirror-reverse'' strategy to win any game on this board:
\begin{enumerate}
    \item If possible to win by completing a cycle, do so.
    \item If that is not possible, mirror Player~1's strategy by observing Player~1's most recent move $i \to j$ and playing $j' \to i'$, the partner edge of $ij$ with its arrow reversed.
\end{enumerate}
For example, in Figure \ref{fig:rot}, if Player~1's first move is to mark the edge $a \to b$, Player~2 would respond by marking the edge $d \to c$. Here is one possible complete game where Player~2 follows this mirror-reverse strategy with rotation:
\begin{itemize}
    \item Player~1 marks $a \to b$.
    \item Player~2 marks $d \to c$.
    \item Player~1 marks $a \to e$.
    \item Player~2 marks $e \to c$.
We note at this point that $a$ is an almost-source, and $c$ is an almost-sink.  
    \item Player~1 marks $e \to b$.
    \item Player~2 marks $d \to e$.
\end{itemize}
At this point the remaining two edges are unmarkable, and therefore Player~2 wins.

We should worry whether Player~2 is always able to make a move---in other words, the prescribed edge to be marked is available, does not create a source/sink, and does not produce a death move for Player~2.  

Indeed, this mirroring strategy may require Player 2 to make a death move in some instances. Consider Figure \ref{fig:rot-problem} below and possible moves made by Player~1 and Player~2 in a game.

\begin{figure}[H]
    \centering
\begin{tikzpicture}[scale=1.25]
		\begin{scope}[very thick, every node/.style={sloped,allow upside down}]
		\Bvertex (A) at (-1,1) [label=left:$a$]{};
		\Bvertex (B) at (1,1)  [label=right:$b$] {};
		\Bvertex (C) at (1,-1) [label=right:$c$] {};
		\Bvertex (D) at (-1,-1) [label=left:$d$] {};
		\Bvertex (E) at (0,0) [label=above:$e$] {};
		\draw (A) to (B);
		\draw (B) to (C);
		\draw (C) to (D);
		\draw (D) to (A);
		\draw (E) to (A);
		\draw (E) to (C);
		\end{scope}		
\end{tikzpicture}
    \caption{A board with $180^\circ$ rotational symmetry.}
    \label{fig:rot-problem}
\end{figure}

\begin{itemize}
    \item Player~1 marks $b \to a$.
    \item Player~2 marks $c \to d$.
    \item Player~1 marks $a \to e$.
    \item Player~2 marks $e \to c$.
\end{itemize}
Oops, we are in trouble, since that was a death move. 
Player~1 can close either cycle and win. The problem appears to be that Player~2's response to Player~1's added to a cycle that Player 1's move was contributing to. 
We could prevent this situation from occurring if we require that no edge and its partner border the same cell.

Now what if we had a board like Figure \ref{fig:rot-edge}, where one edge is its own partner under $180^{\circ}$ rotational symmetry?

\begin{figure}[H]
    \centering
\begin{tikzpicture}[scale=1.25]
		\begin{scope}[very thick, every node/.style={sloped,allow upside down}]
		\Bvertex (A) at (-1,1) [label=left:$a$]{};
		\Bvertex (B) at (1,1)  [label=right:$b$] {};
		\Bvertex (C) at (1,-1) [label=right:$c$] {};
		\Bvertex (D) at (-1,-1) [label=left:$d$] {};
		\Bvertex (E) at (-0.3,0) [label=left:$e$] {};
		\Bvertex (F) at (0.3,0) [label=right:$f$] {};
		\draw (A) to (B);
		\draw (B) to (C);
		\draw (C) to (D);
		\draw (D) to (A);
		\draw (A) to (E);
		\draw (D) to (E);
		\draw (E) to (F);
		\draw (F) to (B);
		\draw (F) to (C);
		\end{scope}		
\end{tikzpicture}
    \caption{A board with $180^\circ$ rotational symmetry.}
    \label{fig:rot-edge}
\end{figure}

In this case, notice that if Player~1 plays the self-symmetric edge, marking $e \to f$ first, then the remainder of the board can be played just as before, with Player~2 now the ``first player'', and Player~1 always mirroring Player~2's moves. 
This suggests a mirror-reversing strategy for Player 1 could be a winning strategy, provided no edge and its partner are part of the same cell.

For example, a complete game could look like:
\begin{itemize}
    \item Player~1 marks $e \to f$.
    \item Player~2 marks $e \to d$.
    \item Player~1 marks $b \to f$. At this point edges $ea$ and $fc$ are currently unplayable.
    \item Player~2 marks $a \to e$. A mistake by Player~2.
    \item Player~1 marks $d \to a$, and wins.
\end{itemize}

We shall prove that the mirror-reversing strategy works for boards with rotational symmetry, but also for boards with a reflective symmetry, as in Figure \ref{fig:ref}.

\begin{figure}[H]
    \centering  
    \begin{tikzpicture}[scale=1.25]
		\begin{scope}[very thick, every node/.style={sloped,allow upside down}]
		\Bvertex (A) at (0,2)  [label=right:$a$] {};
		\Bvertex (B) at (1,1)  [label=right:$b$] {};
		\Bvertex (C) at (1,-1) [label=right:$c$] {};
		\Bvertex (D) at (-1,-1) [label=left:$d$] {};
		\Bvertex (E) at (-1,1) [label=left:$e$]{};
		\Bvertex (F) at (0,0) [label=above:$f$] {};

		\draw (A) to (B);
		\draw (A) to (E);
		\draw (B) to (C);
		\draw (D) to (E);
		\draw (E) to (C);
		\draw (D) to (B);
		
		\end{scope}		
    \end{tikzpicture}
    \caption{A board with reflective symmetry.}
    \label{fig:ref}
\end{figure}
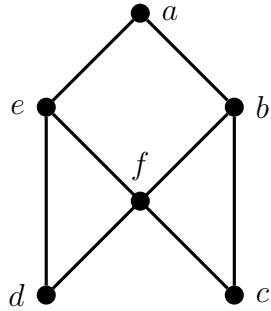

This board also has natural pairings of edges into partners, as before, but this time we use reflection to produce partners.  For example, looking at Figure \ref{fig:ref} above, we see that vertices $b$ and $e$ are partners, and vertices $c$ and $d$ are partners.  Vertex $a$ is a partner to itself, as is vertex $f$.   Similarly, edge $ab$ is partner to edge $ae$, and edge $de$ is partner to edge $cb$. 

For this board, here is a possible game where Player~2 follows a mirror-reverse strategy with reflection.
\begin{itemize}
    \item Player~1 marks $a \to b$.
    \item Player~2 marks $e \to a$.
    \item Player~1 marks $b \to c$.
    \item Player~2 marks $d \to e$.
At this point, all legal moves remaining for Player~1 are death moves. We choose one for Player~1 to make below.
    \item Player~1 marks $e \to f$.
    \item Player~2 marks $f \to d$, and completes a cycle to win.
\end{itemize}

Now consider the board in Figure~\ref{fig:ref-problem} below.

\begin{figure}[H]
    \centering
    \begin{tikzpicture}[scale=1.25]
		\begin{scope}[very thick, every node/.style={sloped,allow upside down}]
		\Bvertex (A) at (0,2)  [label=right:$a$] {};
		\Bvertex (B) at (1,1)  [label=right:$b$] {};
		\Bvertex (C) at (1,-1) [label=right:$c$] {};
		\Bvertex (D) at (-1,-1) [label=left:$d$] {};
		\Bvertex (E) at (-1,1) [label=left:$e$]{};
		\Bvertex (F) at (0,0) [label=above:$f$] {};

		\draw (A) to (B);
		\draw (A) to (E);
		\draw (B) to (C);
		\draw (D) to (E);
		\draw (E) to (C);
		\draw (D) to (B);
		
		\draw (E) to (B);
		\end{scope}		
    \end{tikzpicture}
    \caption{A board with a fixed edge under reflective symmetry.}
    \label{fig:ref-problem}
\end{figure}
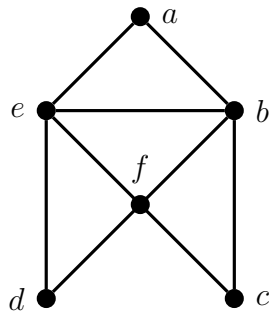

Note that the edge $eb$ is a fixed edge under the reflection.  In this case Player 1 has a winning strategy, which would be to first play the fixed edge (in either direction), and thereafter, no matter what Player~2 does, Player~1 can mirror-reverse Player~2's moves (as long as completing a cycle isn't an option).

Here is a possible game where Player~2 follows this strategy in Figure~\ref{fig:ref-problem}.
\begin{itemize}
    \item Player~1 marks $b \to e$.
    \item Player~2 marks $e \to a$.
    \item Player~1 marks $a \to b$, and wins.
\end{itemize}

This strategy, however may fail for the simple board in Figure~\ref{fig:ref-problem1} with reflective symmetry.

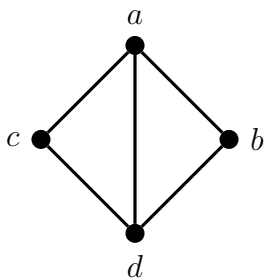
\begin{figure}[H]
    \centering
    \begin{tikzpicture}[scale=1.25]
		\begin{scope}[very thick, every node/.style={sloped,allow upside down}]
		\Bvertex (A) at (0,2)  [label=above:$a$] {};
		\Bvertex (B) at (1,1)  [label=right:$b$] {};
		\Bvertex (C) at (-1,1) [label=left:$c$]{};
		\Bvertex (D) at (0,0) [label=below:$d$] {};

		\draw (A) to (B);
		\draw (A) to (C);
		\draw (C) to (D);
		\draw (B) to (D);
		\draw (A) to (D);
		\end{scope}		
    \end{tikzpicture}
    \caption{A board with an edge along the axis of reflective symmetry.}
    \label{fig:ref-problem1}
\end{figure}
Here is a possible game where Player~1 follows this strategy.
\begin{itemize}
    \item Player~1 marks $a \to d$.
    \item Player~2 marks $b \to d$.
    \item Player~1 marks $d \to c$ following the strategy, but this is a death move.
    \item Player~2 marks $c \to a$, and wins.
\end{itemize}
In this case the problem arose because the marked edge $ad$ is in both cells, but it is not its own mirror-reverse. So it contributes towards building a cycle in one cell but not the other.  So Player~1 does not win this board if they follow a mirror-reversing strategy employing reflection, but notice that Player~1 could win using a mirror-reversing strategy employing $180^{\circ}$ rotation.

We now state our theorem that covers both rotational and reflective symmetry, because these symmetries are examples of \emph{involutive symmetry}.  A board has involutive symmetry if there is a non-trivial symmetry $\tau$ of the board which is its own inverse.  In this case, $\tau$ is called an \emph{involution}.  Any involution assigns a unique partner to each vertex, edge, and cell.  

Call a vertex, edge, or cell \emph{self-involutive} if the involution of that vertex, edge, or cell is itself.  Note that a self-involutive edge may not necessarily fix vertices, and a self-involutive cell may not necessarily fix edges or vertices.
Call a cell \emph{part-involutive} if the cell is not self-involutive, but at least one edge in the cell has its partner also in the cell.
Call a cell \emph{nowhere-involutive} if no edge of the cell has its partner in the cell.  Thus, a cell is either self-involutive, part-involutive, or nowhere-involutive.

\begin{thm}\label{invsym}
Let $G$ be a board with an involution such that each cell is either self-involutive or nowhere-involutive.  If there is no self-involutive edge, then Player~2 has a winning strategy. If there is exactly one self-involutive edge whose vertices are not fixed by the involution, then Player~1 has a winning strategy.
\end{thm}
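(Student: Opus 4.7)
My plan is to extend the mirror-reversing strategy from the motivating examples to a general argument. For each oriented edge $u \to v$, define its \emph{$\tau$-partner} to be the oriented edge $\tau(v) \to \tau(u)$, and call a position \emph{$\tau$-symmetric} if the set of its marked oriented edges is closed under this operation. In both parts, the designated winner (call them $W$) plays the following strategy: (a) the \emph{completion rule}---if some legal move completes a cycle cell, play it; and otherwise (b) the \emph{mirror rule}---play the $\tau$-partner of the opponent's most recent move. In Part 1 the empty initial position is $\tau$-symmetric, so Player 2 applies this strategy directly. In Part 2, Player 1 begins by marking the unique self-involutive edge $e_0$ in either direction; since $\tau$ swaps the endpoints of $e_0$, the chosen orientation is its own $\tau$-partner, so the resulting position is $\tau$-symmetric, and Player 1 then plays the role of $W$.

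I would prove by induction on moves that immediately after each of $W$'s moves the position is $\tau$-symmetric and the opponent has no winning move on their next turn. First I would check the mechanics of the mirror response: the partner edge is unmarked (by $\tau$-symmetry of the prior position together with the self-involutive-edge hypothesis), and it creates no sink or source because $\tau$ interchanges the in-degree at $v$ with the out-degree at $\tau(v)$, so a source at $\tau(v)$ would correspond to a sink at $v$, which was illegal for the opponent. Given the invariant, the game must end with $W$ having made the last move (since $W$ responds immediately after each opponent move), so $W$ wins either via an exercised completion rule or by the last-move rule.

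The hardest step is propagating the ``no winning move for the opponent'' clause through $W$'s mirror response. Suppose for contradiction that after $W$'s mirror move some cell $C$ is missing only a single legal edge $e$ in cycle direction. If $C$ is self-involutive, the hypothesis that $C$ has no self-involutive edge forces $|E(C)|$ to be even, and $\tau$-symmetry forces the marked edges of $C$ to pair up and hence be even in number; but ``one short'' means $|E(C)|-1$ marked edges, an odd count, giving a contradiction. If $C$ is nowhere-involutive, then $\tau(C) \neq C$ and $C, \tau(C)$ share no edges, and by $\tau$-symmetry $\tau(C)$ is also missing only $\tau(e)$ in reverse cycle direction, with $\tau(e)$ legal. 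A short case analysis on whether the opponent's previous move lies in $C$, in $\tau(C)$, or in neither shows that just before $W$ played the mirror, either $C$ or $\tau(C)$ was already one edge short of a cycle with its missing edge a legal completing move. But then the completion rule would have dictated $W$ complete that cycle and win, contradicting that the mirror was played. The nowhere-involutive assumption is essential here: since no edge of $C$ has its partner in $C$, the opponent's move and $W$'s mirror cannot both advance the same cell toward completion, which is what makes the case analysis go through.
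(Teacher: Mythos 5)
Your proposal follows essentially the same route as the paper's proof: the completion-rule-plus-mirror-reverse strategy, Player~1 opening on the self-involutive edge in the second case, and the same three verification obligations (the mirror response is available, is legal, and is not a death move); your nowhere-involutive case, where the completion rule would already have fired, is exactly the paper's argument. However, one step fails as written. In the self-involutive-cell case of the death-move argument you invoke ``the hypothesis that $C$ has no self-involutive edge'' to conclude that $|E(C)|$ is even and that the marked edges of $C$ pair up. That hypothesis exists only in Part~1. In Part~2 the unique self-involutive edge $e_0$ can---and under the theorem's hypotheses must---lie on self-involutive cells: a cell containing $e_0$ cannot be nowhere-involutive, and part-involutive cells are excluded, so the one or two cells bordering $e_0$ are self-involutive (see Figure~\ref{fig:rot-edge}). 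For such a cell $C$, $|E(C)|$ is odd and the marked edges do not all pair up, since $e_0$ is its own partner and is marked from move one; so your parity count is inverted in exactly this case. The contradiction survives, but only with a corrected count: the marked edges of $C$ consist of $e_0$ together with partner pairs, hence are odd in number, whereas an almost-cycle needs $|E(C)|-1$ of them, an even number. Alternatively---and this is the paper's route, which handles both parts uniformly---in a $\tau$-symmetric position the unique unmarked edge of a self-involutive cell must be its own partner, hence self-involutive, hence either nonexistent (Part~1) or equal to $e_0$, which was already played on the first move (Part~2).

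A smaller point: in the sink/source check, your sentence ``a source at $\tau(v)$ would correspond to a sink at $v$, which was illegal for the opponent'' only parses when $v \neq \tau(v)$. Fixed vertices do occur on boards satisfying the hypotheses (the center of a rotation, or vertices on a reflection axis, as in Figure~\ref{fig:ref}), and the paper treats $v = \tau(v)$ separately: there the opponent's move and the mirror move together supply one arrow into $v$ and one arrow out of $v$, so no sink or source can arise. Your symmetry invariant does cover this case---at a fixed vertex of a $\tau$-symmetric position the in-degree equals the out-degree, so such a vertex can be neither a sink nor a source once any edge at it is marked---but you should say this explicitly rather than appeal to a move that was ``illegal for the opponent.''
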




\begin{proof}
If $i$ is a vertex, call its involution partner $i'$.  Similarly, for an edge $e$ or cell $\sigma$, let $e'$ and $\sigma'$ denote their partners under the involution.  If $ij$ denotes the edge with endpoints $i$ and $j$, we say `playing $i \to j$' means a player will mark an arrow from $i$ to $j$. 

We claim that the player with the winning strategy, whom we'll call Player~$W$, can win by using a ``mirror-reverse'' strategy in responding to the other player, whom we'll call Player~$X$. Player~$W$'s mirror-reverse strategy is this:\ Player~$W$ should complete a cycle if possible, but otherwise Player~$W$ should look at the arrow Player~$X$ played on the previous move (say, $i \to j$) and play the mirror-reverse (by playing $j' \to i'$) on the next move.

If there is no self-involutive edge, then we'll show Player~2 has the mirror-reverse strategy as a winning strategy (with Player~2 in the role of Player~$W$ and Player~1 in the role of Player~$X$).

If there is a self-involutive edge whose vertices are not fixed by the involution, then we show that Player~1 has a winning strategy by playing the self-involutive edge (in either direction) as a first move, then using the mirror-reverse strategy in responding to subsequent moves by Player~2 (with Player~1 in the role of Player~$W$ and Player~2 in the role of Player~$X$). Note that, since we assumed the involution doesn't fix vertices of the self-involutive edge $mn$, the vertices $m$ and $n$ must be partners. This implies that an arrow $m \to n$ is itself fixed under the mirror-reverse operation, since the arrow is first reversed by the mirroring involution and then reversed again by the reversal step. (This property is essential to ensure that the arguments below about playing partner edges will hold without trouble.)

We'll now check that this strategy for Player~$W$ is always available, will never lead to a sink or source for Player~$W$, and will not lead to a death move for Player~$W$. This implies Player~$X$ cannot complete a cycle or make the final move, so Player~$W$ wins.

Let's assume Player~$W$ has followed this strategy for the entire game so far, and that Player~$X$ plays $i \to j$ in the current move.  We first note that the move $j' \to i'$ must be available to $X$, because (1) if that partner edge had been previously played by Player~$X$, then Player~$W$ would have played $i \to j$ before the current move, and (2) if that partner edge had been previously played by Player~$W$, it would have been the response to Player~$X$ playing $i \to j$ before the current move.

Now we confirm that the mirroring move will always be legal for Player~$W$, that is, it will not produce a sink or source at some vertex $v$. For suppose it did, and suppose $v \neq v'$. Since every edge incident to $v$ has now been marked, every edge incident to $v'$ has also been marked. Since the mirroring strategy involves switching directions when marking the partner edge, if $v$ is a source (or sink) than $v'$ is a sink (or source). Therefore the move by Player~$X$ that Player~$W$ is mirroring must already have violated the sink-source rule, a contradiction. In the case $v=v'$ (which happens when $v$ is self-involutive) we see that $v$ cannot be a source or sink after Player~$W$'s mirroring move, since Player~$X$'s and Player~$W$'s moves together result in one arrow pointing towards $v$ and another pointing away. For instance, in the board of Figure \ref{fig:rot} above, if Player~$X$ marks $d \to e$, Player~$W$ would respond by marking $e \to b$.  

Next, we note that this move for Player~$W$ is not a death move, i.e., this move will never permit Player~$X$ to play an edge that will complete a cycle.
For if, after $W$'s move, there were an edge $e$ that Player~$X$ could mark to complete a cycle cell around a cell $\sigma$, then $W$'s move must have produced an almost-cycle:\ all but one edge in that cycle must already be marked with arrows cycling in one direction around the cell $\sigma$. 
By assumption, $\sigma$ is either self-involutive or nowhere-involutive. 
In the case that $\sigma$ is self-involutive, the final unmarked edge $e$ must be a self-involutive edge (else if $e$'s partner edge in $\sigma$ were distinct, then it would have been played earlier, which would imply that $e$ had already been played before this move).  But if $e$ were self-involutive, it would have been played as the first move of the game (by Player~1) so it cannot be available now.
And in the case that $\sigma$ is nowhere-involutive, consider the almost-cycle in $\sigma$ and look at the corresponding partner edges---these must have formed an almost-cycle in $\sigma'$ prior to Player~$W$'s move in $\sigma$. But then the mirror-reversing strategy would have urged Player~$W$ to mark the partner edge $e'$ to complete the almost-cycle in $\sigma'$ rather than play the current move to create an almost-cycle in $\sigma$.
Together, these observations show that this move for Player~$W$ cannot have been a death move.
\end{proof}

With the theorem we obtain some immediate corollaries as special cases.

\begin{cor}\label{rotsym}
Let $G$ be a board with $180^{\circ}$ rotational symmetry, and no edge and its partner part of the same cell. If there is no edge through the center of the board then Player~2 has a winning strategy. If there is such an edge, then Player~1 has a winning strategy.
\end{cor}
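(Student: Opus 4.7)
The plan is to verify that a $180^\circ$ rotational symmetry is an involutive symmetry satisfying the hypotheses of Theorem~\ref{invsym}, and then invoke that theorem directly. First, since a $180^\circ$ rotation about a fixed center is a non-trivial symmetry whose square is the identity, it is an involution on the board, so the notions of partner vertex, partner edge, and partner cell from the statement of Theorem~\ref{invsym} are all well defined.

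Next, I would translate the cell hypothesis. Any self-involutive cell is mapped to itself by the rotation, so its boundary edges are permuted amongst themselves by the rotation; consequently, every edge of a self-involutive cell has its partner also inside the cell. The assumption that no edge and its partner belong to the same cell therefore rules out self-involutive cells entirely, leaving every cell nowhere-involutive. This matches the cell hypothesis of Theorem~\ref{invsym} exactly.

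It then remains to classify the self-involutive edges. Since the graph is planar, two distinct edges meet only at vertices, so any edge invariant under $180^\circ$ rotation must have its midpoint at the center of rotation, and there can be at most one such edge. If no edge passes through the center, there are no self-involutive edges, and the first case of Theorem~\ref{invsym} yields a winning strategy for Player~2. If there is an edge $mn$ through the center, it is the unique self-involutive edge, and the $180^\circ$ rotation swaps its endpoints; thus $m$ and $n$ are partners not fixed by the involution, and the second case of Theorem~\ref{invsym} yields a winning strategy for Player~1.

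I do not anticipate significant obstacles here, since the corollary follows from a careful translation of hypotheses into the language of Theorem~\ref{invsym}. The one subtle point worth being explicit about is that in the second case there cannot be two distinct self-involutive edges: this is where planarity is essential, as two distinct edges both fixed by the rotation would have to cross transversely at the center of rotation, which is not permitted as an interior intersection in a planar graph.
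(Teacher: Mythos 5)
Your proof is correct and takes essentially the same approach as the paper: the paper's proof of Corollary~\ref{rotsym} is a one-sentence reduction to Theorem~\ref{invsym}, observing that the hypothesis that no edge and its partner share a cell forces every cell to be nowhere-involutive. The details you supply---that a $180^\circ$ rotation is an involution, and that planarity guarantees at most one self-involutive edge (namely one through the center, whose endpoints are swapped rather than fixed)---are exactly the points the paper leaves implicit.
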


This follows from Theorem \ref{invsym} by noting that if no edge and partner is of the same cell, then every cell is nowhere-involutive.

\begin{cor}\label{mirrorsymbasic}
Let $G$ be a board that is symmetric by reflection across some line, with no edges along that axis of symmetry and at most one edge crossing that axis of symmetry.
On this board, Player~2 has a winning strategy if there is no edge crossing this axis of symmetry. If there is a single edge crossing this axis of symmetry, Player~1 has a winning strategy.
\end{cor}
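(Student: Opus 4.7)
The plan is to deduce the corollary from Theorem~\ref{invsym} by taking $\tau$ to be reflection across the given axis of symmetry. A reflection equals its own inverse, so $\tau$ is an involution of the board, and I only need to verify the hypotheses of Theorem~\ref{invsym} on self-involutive edges and on cells.

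For the edge condition, I would note that an edge $e$ is self-involutive under a reflection iff its two endpoints are either both fixed (so $e$ lies along the axis) or interchanged (so $e$ crosses the axis transversally). The hypotheses forbid the first possibility and allow the second to occur at most once, so the total number of self-involutive edges is $0$ or $1$; moreover, when it is $1$, the endpoints of that edge are swapped and hence not fixed by $\tau$, exactly matching the Player~1 case of Theorem~\ref{invsym}.

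The main obstacle is the cell condition, which I would establish geometrically: a cell $\sigma$ is self-involutive iff the axis meets the interior of $\sigma$. If the axis passes through some interior point $p$ of $\sigma$, then $\tau(\sigma)$ also contains $p$ in its interior, and since distinct cells have disjoint interiors, $\tau(\sigma)=\sigma$. Conversely, if the axis avoids the interior of $\sigma$, then $\sigma$ lies entirely in one closed half-plane bounded by the axis, so every boundary edge of $\sigma$ lies in that same closed half-plane; its $\tau$-partner then lies strictly in the opposite half-plane and therefore cannot be an edge of $\sigma$ (here I use that no edge lies along the axis). In either case $\sigma$ is self-involutive or nowhere-involutive, as required. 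The small subtlety is handling the crossing edge when it exists: the axis enters the interiors of the two cells on either side of that edge, placing those cells in the self-involutive case, so no part-involutive cell is created.

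With both hypotheses of Theorem~\ref{invsym} verified, the two conclusions follow directly: Player~2 has the mirror-reverse strategy as a winning strategy when no edge crosses the axis, while Player~1 wins when one crossing edge exists by playing that edge first and then mirror-reversing Player~2's moves.
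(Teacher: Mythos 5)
Your proposal is correct and takes essentially the same route as the paper: the paper's own (very brief) proof likewise deduces the corollary from Theorem~\ref{invsym}, noting that the hypotheses guarantee at most one self-involutive edge (whose endpoints are swapped, not fixed) and that no cell is part-involutive. Your argument simply fills in the geometric details---the classification of self-involutive edges under a reflection and the half-plane argument showing each cell is self-involutive or nowhere-involutive---that the paper leaves implicit.
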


This can be proved by noting the hypotheses guarantee that there is at most a single self-involutive edge, and no cell is part-involutive, so Theorem \ref{invsym} applies. Note that this theorem does not say anything about boards with multiple self-involutive edges.

\section{Filled Board Theorem}

Because of the sink-source rule, the Game of Cycles has a topological flavor that is reminiscent of the Brouwer fixed point theorem or theorems about nonvanishing vector fields on balls.  In particular, the arrows along edges are like a `discrete vector field' and the source-sink rule prevents the existence of a `vanishing point' at vertices.  The cycle cell is the analogue of a vanishing point at cells.
The Game of Cycles resembles the process of converting a graph into a digraph, which can be interpreted in terms of redesigning a road map in a city from two-way streets to one-way streets. The vertices are interpreted as intersections and the edges as streets. The assumption that no sinks and no sources are allowed means that there are no intersections with streets exclusively in or streets exclusively out. 
The roles of the players and the winning strategy can be omitted in this interpretation but another problems becomes more relevant:\ for which game boards (street designs) there exists a traffic design that uses exclusively one way streets. In the game terminology we express it as having no unmarkable edges.


\begin{thm}
\label{prop:cyclecell}
Let $G$ be a finite, connected game board with no sinks and no sources and such that every edge is marked with an arrow. Then $G$ contains a cycle cell.
\end{thm}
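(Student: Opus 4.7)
My plan is a discrete Poincar\'e--Hopf style counting argument. For each vertex $v$, let $2k_v$ denote the number of times the ``arrow in'' vs.\ ``arrow out'' status flips as one reads the edges incident to $v$ in cyclic rotation; since $v$ is neither a sink nor a source, both statuses appear, so $k_v \geq 1$. For each face $\sigma$ of the planar embedding (including the unbounded face $\sigma_\infty$), let $2j_\sigma$ denote the number of times the ``agrees vs.\ disagrees with counterclockwise boundary traversal'' status flips along the boundary walk of $\sigma$. A bounded face $\sigma$ is a cycle cell exactly when $j_\sigma = 0$, and $j_\sigma \geq 0$ always.

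The heart of the argument is a local XOR at each \emph{corner}---a vertex--face incidence together with the two consecutive edges at the vertex bounding the face's local wedge. A short four-case analysis on the in/out configurations at a corner shows that it contributes a direction change at the vertex if and only if it does not contribute one at the face. Since the total number of corners equals $\sum_v \deg(v) = 2E$, summing this XOR over all corners yields the identity
\[
\sum_v k_v \;+\; \sum_\sigma j_\sigma \;=\; E.
\]

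Suppose for contradiction that no bounded cell of $G$ is a cycle cell. Then $j_\sigma \geq 1$ for each bounded face, while $j_{\sigma_\infty} \geq 0$ trivially. Writing $V$, $E$, and $F$ for the numbers of vertices, edges, and faces of the planar embedding (so there are $F-1$ bounded faces), the identity and the lower bounds combine to give
\[
V + (F-1) \;\leq\; \sum_v k_v + \sum_\sigma j_\sigma \;=\; E,
\]
i.e., $V - E + F \leq 1$. This contradicts Euler's formula $V - E + F = 2$ for the connected planar graph $G$, so some bounded cell must be a cycle cell.

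The step I expect to be the main obstacle is the corner-XOR verification: one must fix orientation conventions carefully so that the cyclic rotation around a vertex interacts correctly with the counterclockwise boundary traversal around each adjacent face, and then carry out the four-case arrow check. A secondary subtlety is that a face's boundary walk may revisit a vertex---whenever $G$ has a bridge, the unique face containing it traverses that bridge twice---but the corner-indexed bookkeeping handles this cleanly by counting each visit as its own corner, so both $\sum_v \deg(v) = 2E$ and $\sum_\sigma \ell(\sigma) = 2E$ hold and the rest of the argument goes through unchanged.
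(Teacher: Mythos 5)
Your counting argument is genuinely different from the paper's proof (which constructs an ``inside-absorbing'' directed cycle and iteratively shrinks it), and its core checks out: at a corner the face's boundary walk enters along one edge and leaves along the other, so the walk agrees with the first edge precisely when that edge points \emph{in} at the vertex and agrees with the second precisely when it points \emph{out}; the four-case check then gives exactly one flip (vertex or face, never both) per corner, so $\sum_v k_v + \sum_\sigma j_\sigma = E$ holds, and the comparison with Euler's formula is clean. The gap is not in this bookkeeping but in the step ``no cycle cell $\Rightarrow$ every bounded face has $j_\sigma \geq 1$.'' You treat repeated visits in a boundary walk as arising only from bridges, but a boundary walk can revisit a \emph{vertex} without repeating any edge, namely at a cut vertex, and for such pinched faces the implication ($j_\sigma = 0 \Rightarrow$ cycle cell) fails. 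Concretely: take a directed triangle $A \to B \to C \to A$ and, inside it, a second triangle on vertices $A, D, E$ sharing only the vertex $A$ and directed the other way around, $A \to E \to D \to A$. No vertex is a sink or source, every edge is marked, and the bounded face lying between the two triangles has $j_\sigma = 0$ (its boundary walk is consistently directed), yet it is not a cycle cell under the paper's definition: its boundary is not a single cycle enclosing that cell, and its edges do not all cycle clockwise or all counterclockwise. So for boards with cut vertices your contradiction argument is incomplete. (Bridges, by contrast, are harmless: a face traversing an edge twice automatically has $j_\sigma \geq 1$, which is the inequality you need.)

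The gap is repairable, but it needs an extra idea beyond the identity: run the argument on a minimal counterexample (fewest cells). If the bounded face $\sigma$ with $j_\sigma = 0$ is pinched, its boundary decomposes into edge-disjoint, consistently directed simple cycles; choose a constituent cycle $C$ that does not enclose $\sigma$ (all but one constituent have this property). The sub-board consisting of $C$ and everything inside it is connected, has every edge marked, and has no sinks or sources---interior vertices keep all their edges, and every vertex of $C$ has one in-arrow and one out-arrow along $C$ itself since $C$ is a directed cycle. It also has strictly fewer cells than $G$ (it omits $\sigma$), so by minimality it contains a cycle cell, which is also a cell of $G$, a contradiction. Note what this patch is: a ``recurse into a smaller enclosed region'' step, which is exactly the engine of the paper's proof (its Claim 2). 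So the fair summary is that your Euler-characteristic identity elegantly replaces the paper's Claim 1 and settles all $2$-connected boards in one stroke, but some version of the paper's recursion still appears necessary to handle boards with cut vertices.
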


Since every edge $e$ of our board will have an arrow, we will henceforth use this notation to indicate the direction of each edge:\ we write $e=uv$ to indicate an edge with an arrow $u \to v$. 

\begin{proof}
Our strategy to show that the game board has a cycle cell is to first find a directed cycle (not necessarily surrounding a cell), and then use an iterative process that produces progressively smaller directed cycles. Since the game board is finite, this will eventually lead to a directed cycle surrounding a single cell---the desired cycle cell. 

It will be important to our construction that any such directed cycle $\gamma$ be \emph{inside-absorbing}:
    if $e$ is an edge in $G$ that lies inside the directed cycle $\gamma$ and is incident to a vertex $v$ on $\gamma$, then $e=uv$ for some vertex $u$. 
(Note that a cycle cell would vacuously satisfy this property.)

\begin{figure}[ht]
		\begin{tikzpicture}[scale=1.2]
		\begin{scope}[very thick, every node/.style={sloped,allow upside down}]
		\Bvertex (A) at (0,0){};
		\Bvertex (B) at (1.3,0){};
		\Bvertex (D) at (1.9,1.2){};
		\Bvertex (E) at (0.8,2.2){};
		\Bvertex (G) at (-0.6,1.6){};
		\Bvertex (H) at (-0.6,.7){};
		\node(I) at (.8,1.4){};
		\node(J) at (0.6,1.4){};
		\node(L) at (1,1.4){};
		\node(M) at (1,1){};
		\node(N) at (1.2,1){};
		\node(P) at (.9,.8){};
		\node(Q) at (.5,.8){};
		\node(R) at (.3,1){};
		\node(S) at (.3,.7){};
		\node(T) at (.2,1.4){};
		\node(U) at (.2,1.8){};
		\node(V) at (.2,1.3){};
		
		\draw (A) -- node {\midarrow} (B);
		\draw (B) -- node {\midarrow} (D);
		\draw (D) -- node {\midarrow} (E);
		\draw (E) -- node {\midarrow} (G);
		\draw (G) -- node {\midarrow} (H);
		\draw (H) -- node {\midarrow} (A);
		
		\draw (I) -- node {\midarrow} (E);
		\draw (L) -- node {\midarrow} (D);
		\draw (M) -- node {\midarrow} (D);
		\draw (P) -- node {\midarrow} (B);
		\draw (Q) -- node {\midarrow} (A);
		\draw (R) -- node {\midarrow} (H);
		\draw (S) -- node {\midarrow} (H);
		\draw (V) -- node {\midarrow} (G);
		\end{scope}
		\end{tikzpicture}
	\caption{An inside-absorbing directed cycle.}
	\label{insideabs}
\end{figure}

In Figure \ref{insideabs}, the boundary of the figure is a directed cycle (since the arrows flow in one direction) and it is inside-absorbing (because on the inside, all incident edges point towards the cycle).

\begin{claim}\label{cl:cycle}
The game board $G$ must contain a directed cycle that is inside-absorbing. 
\end{claim}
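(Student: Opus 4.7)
The plan is to prove the claim by a minimality argument: exhibit a directed cycle in $G$, pass to one that encloses the fewest bounded cells of the board, and show that minimality forces inside-absorption.

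First I would observe that $G$ contains at least one directed cycle. Because no vertex is a sink, every vertex has at least one outgoing edge, so starting from an arbitrary vertex and repeatedly following outgoing edges produces an arbitrarily long walk. Since $G$ is finite, such a walk must revisit some vertex, and the segment between two visits is a directed cycle. Among all directed cycles in $G$, let $\gamma$ be one that encloses the fewest bounded cells of the game board; this count is well defined because $\gamma$ is a Jordan curve in the planar embedding.

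Next I would argue by contradiction that $\gamma$ is inside-absorbing. Suppose not: then there is an edge $e$ incident to some $v \in \gamma$, lying in the interior of $\gamma$, with $e = vu$ (the arrow points from $v$ into the interior). From $u$ I would build a directed walk $u = u_0, u_1, u_2, \ldots$ by selecting an outgoing edge at each step, which is always possible because there are no sinks. Planarity of $G$ together with the Jordan curve theorem ensures that no edge from an interior vertex can cross $\gamma$, so every $u_i$ is either strictly inside $\gamma$ or on $\gamma$. Two cases arise from finiteness. Either the walk revisits some earlier interior vertex $u_j$ before meeting $\gamma$, in which case $u_j, u_{j+1}, \ldots, u_i$ is a directed cycle lying strictly inside $\gamma$ and enclosing strictly fewer cells, contradicting minimality; or the walk first meets $\gamma$ at some $u_k = w$, giving a directed simple path $P : v \to u_0 \to \cdots \to w$ through the interior with endpoints on $\gamma$. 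In the latter case I would concatenate $P$ with the directed arc of $\gamma$ running from $w$ back to $v$ (in the orientation of $\gamma$) to obtain a new directed cycle; because $P$ lies strictly inside $\gamma$ except at its endpoints, this new cycle encloses a proper subset of the cells enclosed by $\gamma$, again contradicting minimality. Hence no such edge $e$ exists, and $\gamma$ is inside-absorbing.

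The hard part will be the planar topology bookkeeping: justifying that the interior chord $P$ actually splits the interior of $\gamma$ into two pieces each enclosing strictly fewer cells than $\gamma$, and justifying that a walk starting at an interior vertex cannot leak to the exterior of $\gamma$. Both steps rest on the Jordan curve theorem and the planar embedding hypothesis, but they must be invoked carefully so that the strict inequality in the cell-count comparison holds in every configuration (including the degenerate case $v = w$, which is folded into the first subcase above).
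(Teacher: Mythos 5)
Your proposal is correct, but it takes a genuinely different route from the paper. The paper's proof is constructive: it builds a directed path using a \emph{clockwise edge selection} rule (always taking the first outgoing edge clockwise from the incoming one), observes that if the resulting cycle is counterclockwise it is automatically inside-absorbing, and otherwise re-traverses with the opposite (counterclockwise) rule to get a cycle enclosing at most as many cells, iterating and alternating rules until the enclosed-cell count stabilizes. Your argument instead is extremal: pick a directed cycle $\gamma$ enclosing the fewest cells, and show that any violation of inside-absorption lets you perform surgery --- either an entirely interior cycle (when the walk from $u$ revisits an interior vertex) or a chord path $P$ concatenated with an arc of $\gamma$ --- producing a directed cycle enclosing strictly fewer cells, a contradiction. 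Both routes rest on the same planarity facts (edges cannot cross $\gamma$, the interior of a cycle is a union of cells, and each side of a splitting chord contains at least one cell), and your strictness claims all check out, including the degenerate chord case $u \in \gamma$ and the case $v=w$. What each approach buys: yours is logically cleaner and shorter, since minimality absorbs both the termination argument and the clockwise/counterclockwise case analysis in one stroke; indeed, pushed slightly further, a globally minimal directed cycle must bound a cycle cell outright, which would streamline the whole of Theorem~\ref{prop:cyclecell}. The paper's version, by contrast, is algorithmic --- it tells you how to \emph{find} the cycle --- and its selection-rule machinery is explicitly reused in the proof of Claim~\ref{cl:cycle2}, so the constructive detour is not wasted effort there.
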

\emph{Proof of Claim~\ref{cl:cycle}.}
To obtain our directed cycle we begin at any vertex $u_1$ and construct a directed path $u_1 \to u_2 \to \dots \to u_k$. Since our game board contains no sinks, there will be at least one directed edge $u_1 u_2$ for some vertex $u_2$. To select the other edges in the directed path we define a \emph{clockwise edge selection} rule in the following manner. Select edge $u_2 u_3$ by moving clockwise around vertex $u_2$, starting from the incoming edge $u_1 u_2$ until you encounter the first outgoing edge $u_2 u_3$ for some $u_3$. See Figure~\ref{FBT1} for an example of the rule.

\begin{figure}[ht]
	\captionsetup[subfigure]{labelformat=empty, width=11em}
	\centering
		\subcaptionbox{Clockwise selection rule,\\ first outgoing edge.}{
		\begin{tikzpicture}[scale=1.5]
		\begin{scope}[very thick, every node/.style={sloped,allow upside down}]
		\Bvertex (A) at (0,0)  [label=below right:$u_2$]   {};
		\Bvertex (B) at (0,-1)  [label=right:$u_1$]   {};
		\Bvertex (C) at (-.867,.5)   {};
		\Bvertex (D) at (-.383,.924)  {};
		\Bvertex (E) at (.383,.924) [label=above right:$u_3$]  {};
		\Bvertex (H) at (.957,.29)  {};
		\Bvertex (I) at (.741,.672)  {};
		
		\draw (B) -- node {\midarrow} (A);
		\draw (C) -- node {\midarrow} (A);
		\draw (D) -- node {\midarrow} (A);
		\draw (A) -- node {\midarrow} (E);
		\draw (F) -- node {\midarrow} (A);
		\draw (A) -- node {\midarrow} (I);
		\draw (H)  -- node {\midarrow} (A);
		\draw [-{Triangle[scale=1]}] (-.5,-.8) to[out=160,in=-100] (-1.1,.5) to[out=80,in=140] (.3,1.2);
		\end{scope}		
		\end{tikzpicture}
	}
	\hskip.6in	
	\subcaptionbox{Counterclockwise selection\\ rule, first outgoing edge.}{
		\begin{tikzpicture}[scale=1.5]
		\begin{scope}[very thick, every node/.style={sloped,allow upside down}]
		\Bvertex (A) at (0,0)  [label=below right:$w_2$]   {};
		\Bvertex (B) at (0,-1)  [label=right:$w_1$]   {};
		\Bvertex (C) at (-.867,.5)   {};
		\Bvertex (D) at (-.383,.924)  {};
		\Bvertex (E) at (.383,.924)  {};
		\Bvertex (H) at (.957,.29)  {};
		\Bvertex (I) at (.741,.672) [label=above:$w_3$] {};
		
		\draw (B) -- node {\midarrow} (A);
		\draw (C) -- node {\midarrow} (A);
		\draw (D) -- node {\midarrow} (A);
		\draw (A) -- node {\midarrow} (E);
		\draw (A) -- node {\midarrow} (I);
		\draw (H) -- node {\midarrow} (A);
		\draw [-{Triangle[scale=1]}] (.5,-.8) to[out=20,in=-80] (1.3,.5) to[out=100,in=30] (.9,.9);
		\end{scope}		
		\end{tikzpicture}	
	}
	\hskip.6in
		\subcaptionbox{A path that leads to a\\ counterclockwise cycle.}{
		\begin{tikzpicture}[scale=1.2]
		\begin{scope}[very thick, every node/.style={sloped,allow upside down}]
		\Bvertex (A) at (0,2)   {};
		\Bvertex (B) at (1,1)    {};
		\Bvertex (C) at (1.3,1.8)   {};
		\Bvertex (D) at (2,1)  {};
		\Bvertex (E) at (-1,1)   {};
		\Bvertex (F) at (0,0)    {};
		\Bvertex (G) at (2,0)  {};
		
		\draw (B) -- node {\midarrow} (A);
		\draw (A) -- node {\midarrow} (E);
		\draw (E) -- node {\midarrow} (F);
		\draw (F) -- node {\midarrow} (B);
		\draw (G) -- node {\midarrow} (D);
		\draw (D) -- node {\midarrow} (C);
		\draw (C)  -- node {\midarrow} (A);
		\end{scope}		
		\end{tikzpicture}			
		}
	\caption{Edge selection rules to produce a directed cycle.}
	\label{FBT1}
\end{figure}

Now do the clockwise edge selection rule at $u_3$ to find $u_4$, and do the rule at $u_4$ to get $u_5$, etc., and continue in this fashion until you eventually reach a vertex $u_k$ that already exists in the path.
This yields a directed cycle $C$. It is either a counterclockwise cycle (if it surrounds a bounded region on its left) or a clockwise cycle (if it surrounds a bounded region on its right).

\begin{figure}[H]\centering
	\captionsetup[subfigure]{labelformat=empty, width=10em}
	\subcaptionbox{Clockwise selection rule that produces a counterclockwise cycle.}{
		\begin{tikzpicture}[scale=1.2]
		\begin{scope}[very thick, every node/.style={sloped,allow upside down}]
		\Bvertex (A) at (0,0){};
		\Bvertex (B) at (1.3,0){};
		\Bvertex (C) at (1.9,.7){};
		\Bvertex (D) at (1.9,1.6){};
		\Bvertex (E) at (1.3,2.2){};
		\Bvertex (F) at (0,2.2){};
		\Bvertex (G) at (-0.6,1.6){};
		\Bvertex (H) at (-0.6,.7){};
		\node(I) at (.8,1.4){};
		\node(J) at (0.6,1.4){};
		\node(L) at (1,1.6){};
		\node(M) at (1,1.2){};
		\node(N) at (1.2,1){};
		\node(P) at (.9,.8){};
		\node(Q) at (.5,.8){};
		\node(R) at (.3,1){};
		\node(S) at (.3,.7){};
		\node(T) at (.2,1.4){};
		\node(U) at (.2,1.8){};
		\node(V) at (.2,1.3){};
		\path (0,-.5) node[below,opacity=0] {};
		
		\draw (A) -- node {\midarrow} (B);
		\draw (B) -- node {\midarrow} (C);
		\draw (C) -- node {\midarrow} (D);
		\draw (D) -- node {\midarrow} (E);
		\draw (E) -- node {\midarrow} (F);
		\draw (F) -- node {\midarrow} (G);
		\draw (G) -- node {\midarrow} (H);
		\draw (H) -- node {\midarrow} (A);
		
		\draw (I) -- node {\midarrow} (E);
		\draw (J) -- node {\midarrow} (F);
		\draw (L) -- node {\midarrow} (D);
		\draw (M) -- node {\midarrow} (D);
		\draw (N) -- node {\midarrow} (C);
		\draw (P) -- node {\midarrow} (B);
		\draw (Q) -- node {\midarrow} (A);
		\draw (R) -- node {\midarrow} (H);
		\draw (S) -- node {\midarrow} (H);
		\draw (T) -- node {\midarrow} (H);
		\draw (U) -- node {\midarrow} (G);
		\draw (V) -- node {\midarrow} (G);
		\end{scope}
		\end{tikzpicture}}
	\hskip.25in
	\subcaptionbox{Clockwise selection rule that produces a clockwise cycle.}{
		\begin{tikzpicture}[scale=1.1]
		\begin{scope}[very thick, every node/.style={sloped,allow upside down}]
		\Bvertex (A) at (0,0) [label=above:$a$]{};
		\Bvertex (B) at (1.3,0){};
		\Bvertex (C) at (1.9,.7){};
		\Bvertex (D) at (1.9,1.6){};
		\Bvertex (E) at (1.3,2.2){};
		\Bvertex (F) at (0,2.2){};
		\Bvertex (G) at (-0.6,1.6){};
		\Bvertex (H) at (-0.6,.7) [label=right:$b$]{};
		\node(I) at (1.5,3){};
		\node(J) at (-0.3,3){};
		\node(K) at (2.6,2.2){};
		\node(L) at (2.7,1.6){};
		\node(M) at (2.6,1.2){};
		\node(N) at (2.6,1){};
		\node(O) at (2.6,.4){};
		\node(P) at (1.9,-0.5){};
		\node(Q) at (-.7,-0.5){};
		\node(R) at (-1.5,1.1){};
		\node(S) at (-1.5,.7){};
		\node(T) at (-1.5,.3){};
		\node(U) at (-1.5,2.0){};
		\node(V) at (-1.5,1.4){};
		\path (0,-.6) node[below,opacity=0] {};

		\draw (B) -- node {\midarrow} (A);
		\draw (A) -- node {\midarrow} (H);
		\draw (H) -- node {\midarrow} (G);
		\draw (G) -- node {\midarrow} (F);
		\draw (F) -- node {\midarrow} (E);
		\draw (E) -- node {\midarrow} (D);
		\draw (D) -- node {\midarrow} (C);
		\draw (C) -- node {\midarrow} (B);
		
		\draw (I) -- node {\midarrow} (E);
		\draw (J) -- node {\midarrow} (F);
		\draw (K) -- node {\midarrow} (D);
		\draw (L) -- node {\midarrow} (D);
		\draw (M) -- node {\midarrow} (D);
		\draw (N) -- node {\midarrow} (C);
		\draw (O) -- node {\midarrow} (C);	
		\draw (P) -- node {\midarrow} (B);
		\draw (Q) -- node {\midarrow} (A);
		\draw (R) -- node {\midarrow} (H);
		\draw (S) -- node {\midarrow} (H);
		\draw (T) -- node {\midarrow} (H);
		\draw (U) -- node {\midarrow} (G);
		\draw (V) -- node {\midarrow} (G);
		\end{scope}
		\end{tikzpicture}}
	\hskip.25in
	\subcaptionbox{Counterclockwise selection rule used inside a clockwise cycle.}{
		\begin{tikzpicture}[scale=1.1]
		\begin{scope}[very thick, every node/.style={sloped,allow upside down}]
		\Bvertex (A) at (0,-.2) [label=above:$a$]{};
		\Bvertex (B) at (1.1,-.2){};
		\Bvertex (C) at (1.9,.5){};
		\Bvertex (D) at (1.9,1.6){};
		\Bvertex (E) at (1.1,2.2){};
		\Bvertex (F) at (0,2.2){};
		\Bvertex (G) at (-0.8,1.6){};
		\Bvertex (H) at (-0.8,.5) [label=right:$b$]{};
		\Bvertex (aa) at (1.4,1.4) {};
		\Bvertex (bb) at (.8,1.7) [label=left:$e$]{};
		\Bvertex (cc) at (.4,1.1)[label=right:$d$]{};
		\Bvertex (dd) at (1.4,.8) {};
		\Bvertex (ee) at (.8,.5){};
		\Bvertex (ff) at (-.3,1.2)[label=above:$c$]{};
		
		\node(I) at (1.5,3){};
		\node(J) at (-0.3,3){};
		\node(K) at (2.6,2.2){};
		\node(L) at (2.7,1.6){};
		\node(M) at (2.6,1.2){};
		\node(N) at (2.6,1){};
		\node(O) at (2.6,.4){};
		\node(P) at (1.9,-0.5){};
		\node(Q) at (-.7,-0.5){};
		\node(R) at (-1.5,1.1){};
		\node(S) at (-1.5,.7){};
		\node(T) at (-1.5,.3){};
		\node(U) at (-1.5,2.0){};
		\node(V) at (-1.5,1.4){};
		\node(W) at (-.1,-.9){};
		
		\draw (B) -- node {\midarrow} (A);
		\draw (A) -- node {\midarrow} (H);
		\draw (H) -- node {\midarrow} (G);
		\draw (G) -- node {\midarrow} (F);
		\draw (F) -- node {\midarrow} (E);
		\draw (E) -- node {\midarrow} (D);
		\draw (D) -- node {\midarrow} (C);
		\draw (C) -- node {\midarrow} (B);
		\draw (aa) -- node {\midarrow} (D);
		\draw (bb) -- node {\midarrow} (aa);
		\draw (cc) -- node {\midarrow} (bb);
		\draw (C) -- node {\midarrow} (dd);
		\draw (dd) -- node {\midarrow} (ee);
		\draw (ee) -- node {\midarrow} (cc);
		\draw (ff) -- node {\midarrow} (cc);
		\draw (H) -- node {\midarrow} (ff);
		
		\draw (I) -- node {\midarrow} (E);
		\draw (J) -- node {\midarrow} (F);
		\draw (K) -- node {\midarrow} (D);
		\draw (L) -- node {\midarrow} (D);
		\draw (M) -- node {\midarrow} (D);
		\draw (N) -- node {\midarrow} (C);
		\draw (O) -- node {\midarrow} (C);	
		\draw (P) -- node {\midarrow} (B);
		\draw (Q) -- node {\midarrow} (A);
		\draw (R) -- node {\midarrow} (H);
		\draw (S) -- node {\midarrow} (H);
		\draw (T) -- node {\midarrow} (H);
		\draw (U) -- node {\midarrow} (G);
		\draw (V) -- node {\midarrow} (G);
		\draw (W) -- node {\midarrow} (A);
		\end{scope}
		\end{tikzpicture}}
	\caption{The results of using either the clockwise or counterclockwise  edge selection rules.}
	\label{FBT2}
\end{figure}

Case 1:\ Suppose $C$ is a counterclockwise cycle. Then by construction $C$ is inside-absorbing, because our clockwise edge selection rule at vertices ensures that an inside edge (on the ``left side'' of the cycle) must be directed towards the vertex.  See Figure \ref{FBT2}, leftmost diagram.

Case 2:\ If $C$ is a clockwise cycle, then our construction has produced a cycle with the property that every edge \emph{outside} the cycle that is incident to a cycle vertex must be directed toward that vertex, as in the middle diagram of Figure \ref{FBT2}.  We shall use this $C$ to produce a new cycle that is inside-absorbing. 

Start at any directed edge $w_1 w_2$ of $C$.  
Now use a \emph{counterclockwise edge selection} rule to determine an edge $w_2 w_3$ by moving counterclockwise around vertex $w_2$, starting from the incoming edge $w_1 w_2$ until you encounter the first outgoing edge $w_2 w_3$ for some $w_3$.  (See Figure \ref{FBT1}, middle diagram.) Then do the 
counterclockwise edge selection rule 
at $w_3$ to find $w_4$, and continue in this fashion, repeatedly using counterclockwise edge selection until you eventually reach a vertex $w_k$ that already exists in the path. This yields a directed cycle $C'$. 

For an example, see Figure \ref{FBT2}. Performing the counterclockwise edge selection rule on the clockwise cycle in the middle diagram, starting at the edge $ab$, might produce an inner path like $a \to b \to c \to d \to e \to ...$  evident in the rightmost diagram, which winds around and returns to $d$, yielding a directed cycle $C'$.

Note that $C'$ must be contained inside $C$, because all edges outside $C$ and incident to $C$ must be directed towards $C$. This prevents $C'$ from leaving $C$. Thus $C'$ cannot contain more cells than $C$. 

If $C'$ and $C$ contain the same number of cells, then $C'=C$. This means that the clockwise and counterclockwise edge selection rules selected the same edges, implying that all other edges incident to the directed cycle were directed towards cycle vertices, and so Claim~\ref{cl:cycle} is satisfied.

So suppose instead that $C'$ contains strictly fewer cells than $C$. Then we must again break into cases depending on whether $C'$ is clockwise or counterclockwise. If $C'$ is clockwise then (by a similar reasoning as in Case 1 above) $C'$ also satisfies the claim. 
If instead $C'$ is counterclockwise, then 
in a similar fashion to Case 2 above, we can produce another cycle $C''$, contained in $C'$, this time using a \emph{clockwise} edge selection rule.

We can continue producing smaller directed cycles using the clockwise or counterclockwise edges selection rules until an inside-absorbing directed cycle has been created. This process will terminate because at each iteration of creating a new directed cycle, the number of cells within the directed cycle either gets smaller or stays the same. This number cannot get smaller forever, so it must stabilize at some point. This completes the proof of Claim~\ref{cl:cycle}.

\begin{claim}\label{cl:cycle2}
Every inside-absorbing directed cycle on game board $G$ either borders a cycle cell or contains a strictly smaller inside-absorbing directed cycle. 
\end{claim}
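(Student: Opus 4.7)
My plan is to construct a minimum-cell directed cycle $C^*$ strictly inside $\gamma$ and argue it must be inside-absorbing. As a preliminary, note that $\gamma$ can have no chord in its closed disk: any edge between two $\gamma$-vertices lying inside $\gamma$ would, by the inside-absorbing property, need to point toward both endpoints simultaneously, which is impossible. Since $\gamma$ does not border a cycle cell, its interior contains more than one cell and hence at least one interior edge, which by chord-freeness forces at least one vertex to lie strictly inside $\gamma$.

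Next the plan is to exhibit a directed cycle lying strictly inside $\gamma$. For any interior vertex $v$, the no-sink condition gives an incoming edge $u \to v$; planarity forbids $u$ from lying outside $\gamma$, and inside-absorbing forbids $u$ from lying on $\gamma$ (else $u \to v$ would point away from $u \in \gamma$, violating the inside-absorbing property at $u$). So $u$ must itself be interior. Consequently, in the subgraph induced on interior vertices with only interior-interior edges, every vertex has in-degree at least one, which forces a directed cycle (trace backward along incoming edges; the finite graph must eventually revisit a vertex). This cycle lies strictly inside $\gamma$ and so encloses strictly fewer cells than $\gamma$.

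Among all directed cycles in the closed disk of $\gamma$ enclosing strictly fewer cells than $\gamma$, let $C^*$ be one with the fewest enclosed cells. The main obstacle will be showing $C^*$ is inside-absorbing. The argument will be by contradiction: assume some $v \in C^*$ has an edge $v \to u_1$ lying in $C^*$'s closed disk but directed away from $v$. The plan is then to construct a directed cycle $\tilde{C}$ in $C^*$'s closed disk enclosing strictly fewer cells than $C^*$. To do so, extend $v \to u_1$ into a directed path $v, u_1, u_2, \ldots$ by repeatedly choosing an outgoing edge, with the convention that at any $C^*$-vertex encountered we follow the outgoing $C^*$-edge. This convention keeps the path inside $C^*$'s closed disk (at interior vertices, planarity already guarantees outgoing edges stay inside). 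By finiteness, the path first revisits some vertex, yielding a simple directed cycle $\tilde{C}$.

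The key claim will be that $\tilde{C} \neq C^*$: either the first revisit occurs at an interior vertex, in which case $\tilde{C}$ lies entirely in the open interior of $C^*$; or it occurs at $v$ (the only $C^*$-vertex available as a first repeat, since once the path reaches $C^*$ and follows it forward it must pass through $v$ before cycling back to any earlier $C^*$-vertex), in which case $\tilde{C}$ contains the edge $v \to u_1$, which is not in $C^*$. Since $\tilde{C}$ is then a simple closed curve in $C^*$'s closed disk distinct from $C^*$, its enclosed region is a strict subset of $C^*$'s enclosed region, so $\tilde{C}$ encloses strictly fewer cells of $G$ than $C^*$ does. This contradicts the minimality of $C^*$, so $C^*$ must be inside-absorbing, completing the proof.
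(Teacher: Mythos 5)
Your proposal is correct in substance, but it reaches the conclusion by a genuinely different route than the paper in its second half. The first halves are parallel: the paper forms the ``gut'' of the cycle (what survives after deleting every cell, edge, and vertex touching the directed cycle) and observes that each gut vertex must receive an arrow from inside the gut, which is exactly your observation that the no-source condition, planarity, and inside-absorption force every interior vertex to have an incoming edge from another interior vertex; both proofs then trace backwards to find a directed cycle strictly inside $\gamma$. The divergence is in upgrading that cycle to an inside-absorbing one. The paper re-invokes the clockwise/counterclockwise edge-selection machinery of its Claim~5, alternating rules and iterating until the enclosed-cell count stabilizes. You instead make an extremal choice: take $C^*$ with fewest enclosed cells among all directed cycles in the closed disk enclosing strictly fewer cells than $\gamma$, and show minimality forces inside-absorption by growing a forward path from any violating edge $v \to u_1$ and closing it into a directed cycle $\tilde{C} \neq C^*$ with strictly fewer enclosed cells. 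This buys self-containment (Claim~6 no longer leans on Claim~5's selection rules) and replaces an iterative termination argument with well-ordering. A further bonus: you make explicit that an inside-absorbing cycle can have no chord, a fact the paper uses only tacitly when it asserts that an empty gut forces a cycle cell.

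Three small points need tightening, none fatal. First, the incoming edge at an interior vertex comes from the no-\emph{source} condition, not the no-sink condition (no-sink is what supplies the \emph{outgoing} edges in your forward path). Second, ``does not border a cycle cell, hence more than one interior cell'' overlooks the pendant-tree case: a single interior cell whose boundary includes a tree hanging into it from $\gamma$ is not a cycle cell. Argue the contrapositive instead: if no vertex lies strictly inside, then chord-freeness leaves no interior edges at all, so the open disk is a single cell bounded by the directed cycle $\gamma$, i.e., a cycle cell. Third, ``enclosed region a strict subset, hence strictly fewer cells'' deserves one more line: since $\tilde{C} \neq C^*$ and no cycle's edge set can properly contain another's, $\tilde{C}$ has an edge $e \notin C^*$; the interior of $e$ lies in the open disk of $C^*$, so the cell of $G$ on the outer side of $e$ relative to $\tilde{C}$ is enclosed by $C^*$ but not by $\tilde{C}$.
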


\emph{Proof of Claim~\ref{cl:cycle2}.}
Consider the complex consisting of the directed cycle and all edges, vertices, and cells inside of it. Remove the interior of all cells (0-, 1-, and 2-dimensional) in this complex that intersect the directed cycle. 
Call the resulting object the \emph{gut} of the cycle. Note that the gut may take on a number of forms, and may even be disconnected. Some examples are shown in Figure~\ref{FBT3} below. The shading inside the gut of the figure on the left is meant to indicate the existence of any number of configurations of edges and vertices making up the interior of the gut.

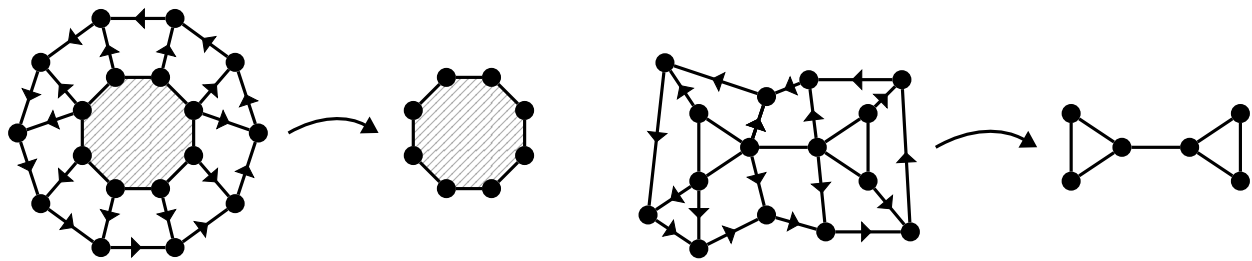
\begin{figure}[H]
\centering
\begin{tikzpicture}[scale=.4]
\begin{scope}[very thick, every node/.style={sloped,allow upside down}]
\Bvertex (A) at (-1.23,-3.81){};
\Bvertex (B) at (1.23,-3.81){};
\Bvertex (C) at (3.23,-2.35){};
\Bvertex (D) at (4,0){};
\Bvertex (E) at (3.23,2.35){};
\Bvertex (F) at (1.23,3.81){};
\Bvertex (G) at (-1.23,3.81){};
\Bvertex (H) at (-3.23,2.35){};
\Bvertex (I) at (-4,0){};
\Bvertex (J) at (-3.23,-2.35){};

\Bvertex (A_1) at (-0.75,-1.85){};
\Bvertex (B_1) at (0.75,-1.85){};
\Bvertex (C_1) at (1.85,-0.75){};
\Bvertex (D_1) at (1.85,0.75){};
\Bvertex (E_1) at (0.75,1.85){};
\Bvertex (F_1) at (-0.75,1.85){};
\Bvertex (G_1) at (-1.85,0.75){};
\Bvertex (H_1) at (-1.85,-0.75){};

\Bvertex (A_2) at (10.25,-1.85){};
\Bvertex (B_2) at (11.75,-1.85){};
\Bvertex (C_2) at (12.85,-0.75){};
\Bvertex (D_2) at (12.85,0.75){};
\Bvertex (E_2) at (11.75,1.85){};
\Bvertex (F_2) at (10.25,1.85){};
\Bvertex (G_2) at (9.15,0.75){};
\Bvertex (H_2) at (9.15,-0.75){};

\draw (A) -- node {\midarrow} (B);
\draw (B) -- node {\midarrow} (C);
\draw (C) -- node {\midarrow} (D);
\draw (D) -- node {\midarrow} (E);
\draw (E) -- node {\midarrow} (F);
\draw (F) -- node {\midarrow} (G);
\draw (G) -- node {\midarrow} (H);
\draw (H) -- node {\midarrow} (I);
\draw (I) -- node {\midarrow} (J);
\draw (J) -- node {\midarrow} (A);

\draw (A_1) to (B_1);
\draw (B_1) to (C_1);
\draw (C_1) to (D_1);
\draw (D_1) to (E_1);
\draw (E_1) to (F_1);
\draw (F_1) to (G_1);
\draw (G_1) to (H_1);
\draw (H_1) to (A_1);

\draw (A_1) -- node {\midarrow} (A);
\draw (B_1) -- node {\midarrow} (B);
\draw (C_1) -- node {\midarrow} (C);
\draw (D_1) -- node {\midarrow} (D);
\draw (D_1) -- node {\midarrow} (E);
\draw (E_1) -- node {\midarrow} (F);
\draw (F_1) -- node {\midarrow} (G);
\draw (G_1) -- node {\midarrow} (H);
\draw (G_1) -- node {\midarrow} (I);
\draw (H_1) -- node {\midarrow} (J);

\draw [bend left,-{Triangle[scale=1]}]  (5,0) to  (8,0);

\draw (A_2) to (B_2);
\draw (B_2) to (C_2);
\draw (C_2) to (D_2);
\draw (D_2) to (E_2);
\draw (E_2) to (F_2);
\draw (F_2) to (G_2);
\draw (G_2) to (H_2);
\draw (H_2) to (A_2);

\begin{scope}[on background layer]
\path[pattern color = gray, pattern = north east lines, opacity=0.8] (A_2.center) -- (B_2.center) -- (C_2.center) -- (D_2.center) -- (E_2.center) -- (F_2.center) -- (G_2.center) -- (H_2.center) -- cycle;
\end{scope}

\begin{scope}[on background layer]
\path[pattern color = gray, pattern = north east lines, opacity=0.8] (A_1.center) -- (B_1.center) -- (C_1.center) -- (D_1.center) -- (E_1.center) -- (F_1.center) -- (G_1.center) -- (H_1.center) -- cycle;
\end{scope}
\end{scope}
\end{tikzpicture}	
\hfill
\begin{tikzpicture}[scale=.45]
\begin{scope}[very thick, every node/.style={sloped,allow upside down}]
\Bvertex (A) at (-1,0){};
\Bvertex (B) at (1,0){};
\Bvertex (C) at (-2.5,1){};
\Bvertex (D) at (-2.5,-1){};
\Bvertex (E) at (2.5,-1){};
\Bvertex (F) at (2.5,1){};

\Bvertex (A_1) at (-.5,1.5){};
\Bvertex (A_2) at (-.5,-2){};
\Bvertex (B_1) at (.75,2){};
\Bvertex (B_2) at (1.25,-2.5){};
\Bvertex (C_1) at (-3.5,2.5){};
\Bvertex (D_1) at (-4,-2){};
\Bvertex (D_2) at (-2.5,-3){};
\Bvertex (F_1) at (3.5,2){};
\Bvertex (E_1) at (3.75,-2.5){};

\Bvertex (A_3) at (10,0){};
\Bvertex (B_3) at (12,0){};
\Bvertex (C_3) at (8.5,1){};
\Bvertex (D_3) at (8.5,-1){};
\Bvertex (E_3) at (13.5,-1){};
\Bvertex (F_3) at (13.5,1){};

\draw (A) to (B);
\draw (A) to (C);
\draw (A) to (D);
\draw (C) to (D);
\draw (B) to (F);
\draw (B) to (E);
\draw (F) to (E);

\draw (D) -- node {\midarrow} (D_1);
\draw (D) -- node {\midarrow} (D_2);
\draw (D_1) -- node {\midarrow} (D_2);
\draw (C) -- node {\midarrow} (C_1);
\draw (C_1) -- node {\midarrow} (D_1);
\draw (A_1) -- node {\midarrow} (C_1);
\draw (A) -- node {\midarrow} (A_1);
\draw (A) -- node {\midarrow} (A_2);
\draw (D_2) -- node {\midarrow} (A_2);
\draw (B) -- node {\midarrow} (B_1);
\draw (B) -- node {\midarrow} (B_2);
\draw (A) -- node {\midarrow} (A_1);
\draw (A_2) -- node {\midarrow} (B_2);
\draw (A) -- node {\midarrow} (A_1);
\draw (B_1) -- node {\midarrow} (A_1);
\draw (F_1) -- node {\midarrow} (B_1);
\draw (F) -- node {\midarrow} (F_1);
\draw (E_1) -- node {\midarrow} (F_1);
\draw (E) -- node {\midarrow} (E_1);
\draw (B_2) -- node {\midarrow} (E_1);

\draw [bend left,-{Triangle[scale=1]}]  (4.5,0) to  (7.5,0);

\draw (A_3) to (B_3);
\draw (A_3) to (C_3);
\draw (A_3) to (D_3);
\draw (C_3) to (D_3);
\draw (B_3) to (F_3);
\draw (B_3) to (E_3);
\draw (F_3) to (E_3);
\end{scope}
\end{tikzpicture}	

\caption{The results of removing the interior of all cells intersecting a directed cycle.}\label{FBT3}
\end{figure}

If the gut is empty, then there are no vertices lying inside the directed cycle, hence the directed cycle must be the border of a cycle cell.  

If the gut is non-empty, consider any vertex $v$ in the gut. Because $v$ is not a source, and because all edges outside the gut and incident to it are pointing away from it, there must be an edge inside the gut that points towards $v$.  
Since the same is true of every vertex in the gut, we can trace 
\textit{backwards} along directed edges from a vertex $v$ to produce a path that must remain inside the gut. As in the proof of Claim~\ref{cl:cycle}, we can use a clockwise selection rule to choose such a path.  Since this path must eventually intersect itself we will locate a directed cycle in inside the gut. If it is not inside-absorbing, we can argue as in our proof of Claim~\ref{cl:cycle} that by alternating our selection rule between clockwise and counterclockwise as needed, we can eventually obtain a directed cycle that is inside-absorbing and smaller than the original directed cycle.
This completes the proof of Claim~\ref{cl:cycle2}.

\medskip
The two claims above allow us to produce a sequence of progressively smaller directed-cycles within game board $G$. By finiteness of the game board, this sequence must be finite and thus ends when a cycle cell has been produced, the desired conclusion of proof of Theorem~\ref{prop:cyclecell}.
\end{proof}

In the proof of Claim \ref{cl:cycle2}, it may be worth noting that the gut 
cannot be entirely one-dimensional, 
i.e., a collection of trees. The proof shows why, because tracing backwards in the gut must yield a cycle in the gut.
So the gut of the diagram in Figure \ref{1dgut} cannot occur.

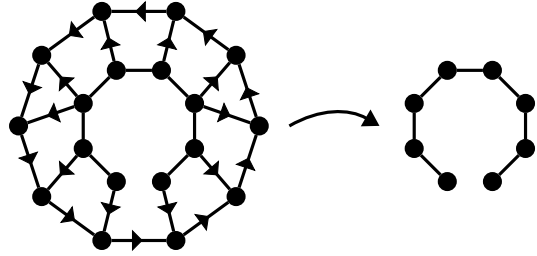
\begin{figure}[H]\center
\begin{tikzpicture}[scale=.4]
\begin{scope}[very thick, every node/.style={sloped,allow upside down}]
\Bvertex (A) at (-1.23,-3.81){};
\Bvertex (B) at (1.23,-3.81){};
\Bvertex (C) at (3.23,-2.35){};
\Bvertex (D) at (4,0){};
\Bvertex (E) at (3.23,2.35){};
\Bvertex (F) at (1.23,3.81){};
\Bvertex (G) at (-1.23,3.81){};
\Bvertex (H) at (-3.23,2.35){};
\Bvertex (I) at (-4,0){};
\Bvertex (J) at (-3.23,-2.35){};

\Bvertex (A_1) at (-0.75,-1.85){};
\Bvertex (B_1) at (0.75,-1.85){};
\Bvertex (C_1) at (1.85,-0.75){};
\Bvertex (D_1) at (1.85,0.75){};
\Bvertex (E_1) at (0.75,1.85){};
\Bvertex (F_1) at (-0.75,1.85){};
\Bvertex (G_1) at (-1.85,0.75){};
\Bvertex (H_1) at (-1.85,-0.75){};

\Bvertex (A_2) at (10.25,-1.85){};
\Bvertex (B_2) at (11.75,-1.85){};
\Bvertex (C_2) at (12.85,-0.75){};
\Bvertex (D_2) at (12.85,0.75){};
\Bvertex (E_2) at (11.75,1.85){};
\Bvertex (F_2) at (10.25,1.85){};
\Bvertex (G_2) at (9.15,0.75){};
\Bvertex (H_2) at (9.15,-0.75){};


\draw (A) -- node {\midarrow} (B);
\draw (B) -- node {\midarrow} (C);
\draw (C) -- node {\midarrow} (D);
\draw (D) -- node {\midarrow} (E);
\draw (E) -- node {\midarrow} (F);
\draw (F) -- node {\midarrow} (G);
\draw (G) -- node {\midarrow} (H);
\draw (H) -- node {\midarrow} (I);
\draw (I) -- node {\midarrow} (J);
\draw (J) -- node {\midarrow} (A);

\draw (B_1) to (C_1);
\draw (C_1) to (D_1);
\draw (D_1) to (E_1);
\draw (E_1) to (F_1);
\draw (F_1) to (G_1);
\draw (G_1) to (H_1);
\draw (H_1) to (A_1);

\draw (A_1) -- node {\midarrow} (A);
\draw (B_1) -- node {\midarrow} (B);
\draw (C_1) -- node {\midarrow} (C);
\draw (D_1) -- node {\midarrow} (D);
\draw (D_1) -- node {\midarrow} (E);
\draw (E_1) -- node {\midarrow} (F);
\draw (F_1) -- node {\midarrow} (G);
\draw (G_1) -- node {\midarrow} (H);
\draw (G_1) -- node {\midarrow} (I);
\draw (H_1) -- node {\midarrow} (J);

\draw [bend left,-{Triangle[scale=1]}]  (5,0) to  (8,0);

\draw (B_2) to (C_2);
\draw (C_2) to (D_2);
\draw (D_2) to (E_2);
\draw (E_2) to (F_2);
\draw (F_2) to (G_2);
\draw (G_2) to (H_2);
\draw (H_2) to (A_2);
\end{scope}
\end{tikzpicture}
\caption{An example of a gut which violates the no source rule if directions were added to its edges.}
\label{1dgut}
\end{figure}


\section{Further Questions}

There are many unanswered questions that naturally emerge from our exploration.  For instance:
\begin{itemize}
    \item Can you determine winning strategies for other classes of boards?  For instance, is there a general winning strategy for boards with exactly 2 cells? Such a result would extend Theorems \ref{ngonchordthm} and  \ref{ngonwithflap}. The board formed by gluing a heptagon and a pentagon together along two adjacent common edges is not covered by those theorems.
    \item To augment Theorem \ref{invsym}, what can be said about boards that have reflective symmetry but with more than one self-involutive edge, as in Figure \ref{fig:symmeg}?  This figure is the simplest board that is not covered by any of our theorems.
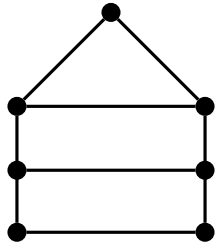
\begin{figure}[H]
    \centering
    \begin{tikzpicture}[scale=1.25]
		\begin{scope}[very thick, every node/.style={sloped,allow upside down}]
		\Bvertex (A) at (0,2)  [] {};
		\Bvertex (B) at (1,1)  [] {};
		\Bvertex (E) at (-1,1) []{};
		\Bvertex (F) at (-1,-.34) [] {};
        \Bvertex (G) at (1,-.34) [] {};
        \Bvertex (H) at (-1,.32) [] {};
        \Bvertex (I) at (1,.32) [] {};
        
		\draw (A) to (B);
		\draw (A) to (E);
		\draw (B) to (G);
		\draw (F) to (E);
		\draw (F) to (G);
		\draw (H) to (I);
		\draw (E) to (B);
		\end{scope}		
    \end{tikzpicture}
    \caption{A board with multiple fixed edges under reflective symmetry.}
    \label{fig:symmeg}
\end{figure}
    \item All the theorems we have proved so far have shown, for various classes of boards, that if the number of edges in the board is odd, Player~1 has a winning strategy, and otherwise Player~2 has a winning strategy. Is there a board that does not follow this pattern?
    \item How does the game change in strategy if finishing the game without a cycle cell is considered a ``draw'' (rather than a win for the last player)?
    \item On a larger board, how does the game change in strategy if the object were to complete as many cycle cells as possible (rather than just one)? (One might explore rewarding the completion of a cycle cell with another turn.)
    \item How would you play the game with 3 or more players?  Are there interesting game boards that can be analyzed?
    \item This game is played on a 2-dimensional cellular complex that can be embedded in the plane. One could define a similar game for a $3$-dimensional complex, where $2$-dimensional faces are marked with an orientation.
\end{itemize}

There are plenty more avenues for exploration here.  Enjoy the game!

\bibliographystyle{plain}
\bibliography{references} 

\vfill

\end{document}